\newcommand{\ind}{\ensuremath{\mathds{1}}}
\newtheorem{thm}{Theorem}[section]
\newtheorem{lem}[thm]{Lemma}
\newtheorem{prop}[thm]{Proposition}
\theoremstyle{definition}
\newcommand{\C}{\ensuremath{\mathbb{C}}}
\newcommand{\R}{\ensuremath{\mathbb{R}}}
\newcommand{\E}{\ensuremath{\mathbb{E}}}
\newcommand{\Cov}{\ensuremath{\mathrm{Cov}}}
\newsavebox{\smlmat}
\savebox{\smlmat}{$\left[\begin{smallmatrix}1 & 2 \\ 2 &1\end{smallmatrix} \right]$ }
\title{The CLT Analogue for Cyclic Urns}
\author{Noela S.~M\"uller and Ralph Neininger\\
Institute for Mathematics\\
J.W.~Goethe University\\
60054 Frankfurt a.M.\\
Germany\\ \\
Email: {\tt \{nmueller,neiningr\}@math.uni-frankfurt.de}}
\begin{document}

\maketitle

\begin{abstract}
A cyclic urn is an urn model for balls of types $0,\ldots,m-1$ where in each draw the ball drawn, say of type $j$, is returned to the urn together with a new ball of type $j+1 \mod m$. The case $m=2$ is the well-known Friedman urn. The composition vector, i.e., the vector of the numbers of balls of each type after $n$ steps is, after normalization, known to be asymptotically normal for $2\le m\le 6$. For $m\ge 7$ the normalized composition vector does not converge. However, there is an almost sure approximation by a periodic random vector. In this paper  the asymptotic fluctuations around this periodic random vector are identified.  We show that these fluctuations are asymptotically normal for all $m\ge 7$. However, they are of maximal dimension $m-1$ only when $6$ does not divide $m$. For $m$ being a multiple of $6$ the fluctuations are supported by a two-dimensional subspace.  \end{abstract}

\noindent
{\textbf{MSC2010:} 60F05, 60F15, 60C05, 60J10.

\noindent
\textbf{Keywords:} P\'olya urn, cyclic urn, cyclic group, periodicities, weak convergence, CLT analogue, probability metric.

\section{Introduction, phenomena and results}
The aim of this extended abstract is to uncover the nature of fluctuations around almost surely oscillating sequences of random variables as they arise in a number of random combinatorial structures, most commonly in random trees. We develop an analysis for the composition vector of cyclic urns and describe at this example the new phenomena and characteristics of the fine fluctuations around a random oscillating sequence which (in an almost sure sense) approximates the normalized composition vector of a cyclic urn.

A cyclic urn is an urn model with a fixed number $m\ge 2$ of possible colours of balls which we call types $0,\ldots,m-1$. Initially, there is one ball of an arbitrary type. In each step we draw a ball from the urn, uniformly from within the balls in the urn and independently of the history of the urn process. If its type is $j\in\{0,\ldots,m-1\}$ it is placed back to the urn together with a new ball of type $j+1 \mod m$. We denote by $R_n=(R_{n,0},\ldots,R_{n,m-1})^t$ the (column) vector of the numbers of balls of each type after $n$ steps when starting with one ball of type $0$. Hence, we have $R_0=e_0$ where $e_j$ denotes the $j$-th unit vector in $\R^m$, indexing the unit vectors by $0,\ldots,m-1$. For fixed $m\ge 2$ we denote the $m$-th elementary root of unity by $\omega:=\exp(\frac{2\pi\mathrm{i}}{m})$. Furthermore we set
\begin{align}
&\lambda_k:= \Re(\omega^k)=\cos\left(\frac{2\pi k}{m}\right),\qquad \mu_k:= \Im(\omega^k)=\sin\left(\frac{2\pi k}{m}\right),\nonumber\\
&v_k:=\frac{1}{m}\left(1,\omega^{-k},\omega^{-2k},\ldots,
\omega^{-(m-1)k}\right)^t\in\C^m,\quad 0\le k\le m-1. \label{eig_vek}
\end{align}
Note that $v_0= \frac{1}{m}\mathbf{1}:=\frac{1}{m}(1,1,\ldots,1)^t\in \R^m$.

The asymptotic distributional behavior of the sequence $(R_n)_{n\ge 0}$ has  been identified in Janson \cite{Ja83,Ja04,Ja06}, see also Pouyanne \cite{Pou05,Pou08}. Janson also developed a limit theory for the compositions of rather general urn schemes. For simplicity of presentation we state the case when starting with one ball of type $0$. However, when starting with one ball of type $j\in\{0,\ldots,m-1\}$, the corresponding composition vector $R_n^{[j]}$ is obtained in distribution by the relation
 \begin{align}\label{shift}
R_n^{[j]} \stackrel{d}{=}\left({\cal R}^t\right)^j R_n,\quad 0\le j\le m-1,
\end{align}
where the replacement matrix $\mathcal{R}$ is defined in (\ref{R}). Hence, it is sufficient to consider the cyclic urn process started with one ball of colour $0$. An extension to initially having more than one ball is straightforward, see the discussion in \cite[p.~1165]{knne14}.

For the cyclic urns  Janson showed that for $2\le m\le 6$ the normalized composition vector $R_n$ converges in distribution towards a multivariate normal distribution, whereas for $m\ge 7$ there is no convergence by a conventionally standardized version of  the $R_n$ due to subtle periodicities. For $m\ge 7$ there exists  a complex valued random variable $\Xi_1$ (depending on $m$) such that almost surely, as $n\to\infty$, we have
\begin{align}\label{strong}
\frac{R_n- \frac{n}{m}\mathbf{1}}{n^{\lambda_1}}
-2\Re\left(n^{i\mu_1}\Xi_1v_1\right) \to 0.
\end{align}

We now focus on the periodic case $m\ge 7$.  According to
(\ref{strong}) the normalization
$n^{-\lambda_1}(R_n- \frac{n}{m}\mathbf{1})$ does not
converge but is (strongly) approximated by the oscillating
random sequence $(2\Re(n^{i\mu_1}\Xi_1v_1))_{n\ge 0}$. In
the present paper we clarify whether  it is still possible
that the fluctuations of  the $n^{-\lambda_1}(R_n- \frac{n}{m}\mathbf{1})$ around the periodic sequence $(2\Re(n^{i\mu_1}\Xi_1v_1))_{n\ge 0}$  do converge although the sequence itself does not converge. Subsequently, we will call the differences in (\ref{strong})  residuals.

Our main results stated in Theorems \ref{thm1} and \ref{thm2} show that the nature of the asymptotic behavior of the  residuals in (\ref{strong}) depends on the number of colours $m$. For $m\in\{7,8,9,10,11\}$ there is a direct normalization which implies a  multivariate central limit law (CLT) for the residuals. The case $m=12$ also allows a multivariate CLT with a different scaling. For $m>12$ the residuals cannot directly by normalized to obtain convergence. However, considering refined residuals allows a multivariate CLT for all $m>12$. This in fact gives a more refined expansion of the $R_n$, cf.~Theorems \ref{thm1} and \ref{thm2}. There is a further subtlety in the nature of the fluctuations of the residuals: If $6$ divides $m$ the fluctuations of the residuals are asymptotically supported by a two-dimensional plane, i.e., the covariance matrix of the limit normal distribution has rank 2, whereas for all  $m\ge 7$ which are not divided by $6$ this support is a hyperplane (rank $m-1$).

By $\stackrel{d}{\longrightarrow}$ (and $\stackrel{d}{=}$) convergence (resp.~equality) in distribution are denoted,
 for a symmetric positive semi-definite matrix $M$ by ${\cal N}(0,M)$ the centered normal distribution with covariance matrix $M$. For $v\in\C^m$ we denote by $v^*$ the conjugate transpose of $v$. Furthermore, $6\mid m$ and $6 \nmid m$ is short for $6$  divides (resp.~does not divide) $m$.

We distinguish the cases $6\mid m$ and $6 \nmid m$ as follows:
\begin{thm}\label{thm1}
Let $m \geq 7$ with $6\nmid m$ and set $r:=\lfloor (m-1)/6\rfloor$. Then, there exist complex valued random variables $\Xi_1,\ldots,\Xi_r$ such that, as $n \to \infty$, we have
\begin{equation*}
n^{\lambda_1-1/2}\left(\frac{R_n- \E[R_n]}{n^{\lambda_1}} - \sum_{k=1}^r 2n^{\lambda_k-\lambda_1}
\Re\left(n^{i\mu_k}\Xi_k v_k\right)\right) \stackrel{d}{\longrightarrow}\mathcal{N}\left(0,\Sigma^{(m)}\right).
\end{equation*}
The covariance matrix $\Sigma^{(m)}$ has rank $m-1$ and is given by
\begin{equation*}
\Sigma^{(m)}= \sum_{k=1}^{m-1}\frac{1}{|2\lambda_k-1|} v_k v_k^*.
\end{equation*}
\end{thm}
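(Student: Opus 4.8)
The plan is to pass to the eigenbasis of the replacement matrix $\mathcal{R}$ and track each eigen-coordinate separately. Decompose $R_n = \sum_{k=0}^{m-1} \langle R_n, \mathbf{1}\rangle\text{-type projections}$; more precisely, writing $X_{n,k}$ for the projection of $R_n$ onto the eigenline spanned by $v_k$ (equivalently $X_{n,k}=\sum_{j} \omega^{jk} R_{n,j}$ up to normalization), each $X_{n,k}$ evolves according to a one-dimensional Pólya-type recursion with ``eigenvalue'' $\omega^k$, and the real eigenvalue governing its growth is $\lambda_k=\Re(\omega^k)$. The coordinate $k=0$ is deterministic to first order ($R_n$ has $n/m$ balls of each type in expectation), the coordinates with $\lambda_k>1/2$ (these are $k=1,\dots,r$ together with their conjugates $m-k$) are the ``large'' coordinates whose almost sure oscillating limits $\Xi_k$ appear in the statement, and the coordinates with $\lambda_k<1/2$ are the ``small'' coordinates, each of which, after scaling by $n^{-1/2}$, satisfies a one-dimensional martingale CLT. (Since $6\nmid m$, no eigenvalue satisfies $\lambda_k=1/2$, so there is no logarithmic correction — this is precisely where the hypothesis enters.)

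The key steps, in order, are: (i) set up the martingale $M_{n,k} := X_{n,k}/(\text{product of growth factors})$ for each $k$ with $\lambda_k>1/2$, show $L^2$ (indeed $L^p$) convergence $M_{n,k}\to\Xi_k$, and quantify the convergence rate so that $X_{n,k} - n^{\omega^k}\Xi_k$ is of order $n^{1/2}$ (the ``$O(n^{1/2})$ residual'' threshold is exactly where the small coordinates live); this gives the precise meaning of the inner bracket after subtracting $\E[R_n]$ instead of $\frac nm\mathbf{1}$ and after subtracting all the large-coordinate oscillating terms $2n^{\lambda_k-\lambda_1}\Re(n^{i\mu_k}\Xi_k v_k)$. (ii) For each small coordinate $k$, normalize $X_{n,k}$ by $n^{-1/2}$ and apply a martingale central limit theorem (or an $L^2$/contraction-method argument in the style of \cite{knne14}) to get a one-dimensional complex Gaussian limit; compute its variance, which works out to $\frac{1}{|2\lambda_k-1|}$ by solving the recursion for $\E|X_{n,k}|^2$. (iii) Handle the residuals of the large coordinates: after subtracting $n^{\omega^k}\Xi_k$, the leftover from coordinate $k$ (with $\lambda_k>1/2$) is again of order $n^{1/2}$ and contributes its own Gaussian fluctuation — this is why $\Sigma^{(m)}$ is a sum over \emph{all} $k=1,\dots,m-1$, not just the small ones; the factor $|2\lambda_k-1|$ in the denominator appears uniformly. (iv) Assemble the coordinates: the joint limit is Gaussian because the vector $n^{-1/2}(X_{n,1},\dots,X_{n,m-1})$ (after the subtractions) is asymptotically a sum of a martingale-difference array to which a multivariate CLT applies; transform back from the $v_k$-basis to obtain $\Sigma^{(m)}=\sum_{k=1}^{m-1}\frac{1}{|2\lambda_k-1|}v_kv_k^*$. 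Rank $m-1$ follows since the $v_1,\dots,v_{m-1}$ are linearly independent and all coefficients are nonzero (again using $6\nmid m$, so no $\lambda_k=1/2$).

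The main obstacle I expect is step (iii) — showing that the residuals of the \emph{large} coordinates, once the oscillating term $n^{\omega^k}\Xi_k v_k$ is removed, are genuinely of the same order $n^{1/2}$ as the small coordinates and contribute a clean Gaussian of the stated variance. This requires a sharp expansion of $X_{n,k} = n^{\omega^k}M_{n,k}$ with $M_{n,k}=\Xi_k + (\text{fluctuation of order } n^{1/2-\lambda_k})$, i.e., one needs not just $L^p$ convergence of the martingale $M_{n,k}$ but a second-order limit theorem for its fluctuations around $\Xi_k$, and one must verify that the fluctuation terms from different coordinates decorrelate in the limit. A secondary technical point is making the replacement of $\frac nm\mathbf{1}$ by $\E[R_n]$ harmless at the $n^{1/2}$ scale: the difference $\E[R_n]-\frac nm\mathbf{1}$ lives in the span of the large eigenvectors and is of order $n^{\lambda_1}$, but it is deterministic, so after dividing by $n^{\lambda_1}$ and then by $n^{1/2-\lambda_1}$ it must be absorbed — one checks it is captured exactly by the deterministic part of the $\Re(n^{i\mu_k}\Xi_k v_k)$ corrections up to $o(n^{1/2})$. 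Throughout, the computations of the variances reduce to solving linear recursions of the form $a_{n+1}=(1+\frac{c}{n})a_n + (\text{lower order})$, which are routine; the conceptual work is organizing the coordinates by the sign of $2\lambda_k-1$ and controlling the interplay between the almost sure oscillations and the distributional fluctuations.
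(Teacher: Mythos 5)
Your set-up coincides with the paper's up to and including the moment asymptotics: the spectral decomposition into the coordinates $u_k(R_n)$, the martingales $M_{n,k}$ with limits $\Xi_k$ for $\lambda_k>\frac12$, the rate $\E[|M_{n,k}-\Xi_k|^2]\sim n^{1-2\lambda_k}/(2\lambda_k-1)$, the variance $n/|2\lambda_k-1|$ for the small coordinates, the observation that $6\nmid m$ is exactly the absence of $\lambda_k=\frac12$, and the rank argument are all essentially Sections \ref{sec:31}--\ref{sec:32}. Where you diverge is precisely the step the paper declares to be its main contribution, and there your argument has a genuine gap. In step (iv) you assert that, after the subtractions, the vector of all coordinates is ``asymptotically a sum of a martingale-difference array to which a multivariate CLT applies.'' It is not: for a large coordinate the quantity entering the theorem is (a deterministic multiple of) $M_{n,k}-\Xi_k$, i.e.\ a \emph{tail sum} of the increments of the convergent martingale beyond time $n$, whereas the small coordinates are partial sums of increments up to time $n$; these two families do not form a single martingale array in $n$, so no off-the-shelf multivariate martingale CLT applies. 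What is needed---and what you correctly flag as the main obstacle in step (iii) but then do not supply---is a second-order (tail-sum) limit theorem for each $M_{n,k}-\Xi_k$, uniform enough to absorb the oscillating factors $n^{i\mu_k}$ in the scaling, \emph{together with} joint convergence and asymptotic independence of all blocks; decorrelation of the fluctuations across eigenspaces is necessary but not sufficient, since uncorrelatedness does not yield joint Gaussianity here.

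The paper closes this gap by a different route: it embeds the urn into a random binary search tree, derives the pointwise recurrence (\ref{basic_rec}) and the fixed-point equation (\ref{rec_xik}) for the $\Xi_k$, and proves the joint statement (Proposition \ref{prop1}, normality and asymptotic independence of $X_{n,1},\ldots,X_{n,\lfloor m/2\rfloor}$) by the contraction method in the Zolotarev metric $\zeta_3$, extending the techniques of \cite{ne14}; Theorem \ref{thm1} then follows by exactly the rearrangement you describe, the continuous mapping theorem, and Theorem \ref{thm_rank}. A martingale route in your spirit is not hopeless: a Heyde-type CLT for martingale tail sums, applied in the stable (mixing) sense and combined with a standard CLT for the small coordinates, could in principle replace the contraction step, but you would then have to verify stabilization of the conditional variances, a Lindeberg condition, and the cross-eigenspace asymptotic independence---none of which is argued in your proposal. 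A minor further inaccuracy: $\E[R_n]-\frac{n}{m}\mathbf{1}$ does not lie in the span of the large eigenvectors (the expansion (\ref{exp_mean}) leaves an $\mathrm{O}(\sqrt n)$ remainder spread over all eigenspaces), but this point is moot because the theorem centers at $\E[R_n]$.
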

When $6\mid m$ the normalization requires an additional $\sqrt{\log n}$ factor and the rank of the covariance matrix is reduced to $2$:
\begin{thm}\label{thm2}
Let $m \geq 7$ with $6 \mid m$ and set $r:=\lfloor (m-1)/6\rfloor$. Then, there exist complex valued random variables $\Xi_1,\ldots,\Xi_r$  such that, as $n \to \infty$, we have
\begin{equation*}
\frac{n^{\lambda_1-1/2}}{\sqrt{\log(n)}}\left(\frac{R_n- \E[R_n]}{n^{\lambda_1}} - \sum_{k=1}^r 2n^{\lambda_k-\lambda_1}
\Re\left(n^{i\mu_k}\Xi_k v_k\right)\right) \stackrel{d}{\longrightarrow}\mathcal{N}\left(0,\Sigma^{(m)}\right).
\end{equation*}
The covariance matrix $\Sigma^{(m)}$ has rank $2$ and is given by
\begin{equation*}
\Sigma^{(m)}= v_{m/6}v_{m/6}^*+ v_{5m/6}v_{5m/6}^*.
\end{equation*}
 \end{thm}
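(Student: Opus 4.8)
The plan is to diagonalize the urn dynamics, reduce the claim to the asymptotics of $m-1$ scalar complex martingales, and treat those martingales according to the sign of $2\lambda_k-1$. Write $\mathcal F_n=\sigma(R_0,\dots,R_n)$ and let $S$ be the cyclic shift, $Se_j=e_{j+1\bmod m}$. From $\E[R_{n+1}\mid\mathcal F_n]=(I+\tfrac1{n+1}S)R_n$ and the circulant structure of $S$ (with $Sv_k=\omega^kv_k$ and left eigenvectors $u_k:=m\overline{v_k}=(1,\omega^k,\dots,\omega^{(m-1)k})^t$ satisfying $u_k^tv_\ell=\delta_{k\ell}$), the scalar projections $Y_n^{(k)}:=u_k^tR_n$ decouple: $\E[Y_{n+1}^{(k)}\mid\mathcal F_n]=(1+\tfrac{\omega^k}{n+1})Y_n^{(k)}$. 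Hence $M_n^{(k)}:=Y_n^{(k)}/b_n^{(k)}$, where $b_n^{(k)}:=\prod_{j=1}^n(1+\tfrac{\omega^k}{j})=\tfrac{\Gamma(n+1+\omega^k)}{\Gamma(1+\omega^k)\Gamma(n+1)}\sim\tfrac{n^{\omega^k}}{\Gamma(1+\omega^k)}$, is a complex martingale with $M_0^{(k)}=1$ (well defined for $k\neq m/2$; for $k=m/2$ one uses $b_n^{(m/2)}=1/n$ instead, an index negligible in what follows). Since $R_n=\sum_{k=0}^{m-1}Y_n^{(k)}v_k$ and $\E[Y_n^{(k)}]=b_n^{(k)}$, this gives $R_n-\E[R_n]=\sum_{k=1}^{m-1}(Y_n^{(k)}-b_n^{(k)})v_k$, which is real because $Y_n^{(m-k)}=\overline{Y_n^{(k)}}$ and $v_{m-k}=\overline{v_k}$. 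I split $\{1,\dots,m-1\}$ into the \emph{large} indices $\{1,\dots,r\}\cup\{m-r,\dots,m-1\}$, where $\lambda_k>\tfrac12$; the two \emph{critical} indices $k_0:=m/6$ and $m-k_0=5m/6$, where $\lambda_{k_0}=\cos(\pi/3)=\tfrac12$; and the \emph{small} indices $m/6<k<5m/6$, where $\lambda_k<\tfrac12$.

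From the increment $Y_{n+1}^{(k)}-Y_n^{(k)}=\omega^{(J_{n+1}+1)k}$ ($J_{n+1}$ being the type drawn) I read off $\lvert M_{n+1}^{(k)}-M_n^{(k)}\rvert\le2/\lvert b_{n+1}^{(k)}\rvert\asymp n^{-\lambda_k}$, so by orthogonality of martingale increments $\E\lvert M_n^{(k)}\rvert^2\le1+C\sum_{j\le n}j^{-2\lambda_k}$, i.e.\ $O(1)$, $O(\log n)$, $O(n^{1-2\lambda_k})$ in the three regimes; equivalently $\E\lvert Y_n^{(k)}-b_n^{(k)}\rvert^2$ is $O(n^{2\lambda_k})$, $O(n\log n)$, $O(n)$. (These are the familiar second-moment asymptotics for Pólya urns.) For a large index $M^{(k)}$ is $L^2$-bounded, hence $M_n^{(k)}\to M_\infty^{(k)}$ a.s.\ and in $L^2$ with $\lVert M_n^{(k)}-M_\infty^{(k)}\rVert_2=O(n^{1/2-\lambda_k})$; I set $\Xi_k:=(M_\infty^{(k)}-1)/\Gamma(1+\omega^k)=\lim_n(Y_n^{(k)}-b_n^{(k)})/n^{\omega^k}$ for $1\le k\le r$ and $\Xi_{m-k}:=\overline{\Xi_k}$. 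Since $b_n^{(k)}=\tfrac{n^{\omega^k}}{\Gamma(1+\omega^k)}(1+O(1/n))$, it follows that $\lVert(Y_n^{(k)}-b_n^{(k)})-n^{\omega^k}\Xi_k\rVert_2=O(\sqrt n)$ for every (conjugate-)large index, while trivially $\lVert Y_n^{(k)}-b_n^{(k)}\rVert_2=O(\sqrt n)$ for every small index. Multiplying the decomposition of $R_n-\E[R_n]$ by $n^{\lambda_1-1/2}/\sqrt{\log n}$ and using $n^{\lambda_1}n^{\lambda_k-\lambda_1}\Re(n^{i\mu_k}\Xi_kv_k)=\Re(n^{\omega^k}\Xi_kv_k)$, the quantity in the theorem becomes $(n\log n)^{-1/2}$ times the sum of the large-index remainders $(Y_n^{(k)}-b_n^{(k)})-n^{\omega^k}\Xi_k$, the small-index terms $Y_n^{(k)}-b_n^{(k)}$, and the critical contribution $2\Re((Y_n^{(k_0)}-b_n^{(k_0)})v_{k_0})$. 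The first two are $O(1/\sqrt{\log n})\to0$ in $L^2$, and $b_n^{(k_0)}/\sqrt{n\log n}\to0$, so by Slutsky it remains to prove
\[
\frac{1}{\sqrt{n\log n}}\,2\Re\!\big(Y_n^{(k_0)}v_{k_0}\big)\ \stackrel{d}{\longrightarrow}\ 2\Re\!\big(Zv_{k_0}\big),
\]
where $Z$ is a standard rotationally invariant complex Gaussian ($\E Z=0$, $\E\lvert Z\rvert^2=1$, $\E Z^2=0$).

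This reduces to a martingale central limit theorem for $M^{(k_0)}$. From the increment identity one computes $\E[\lvert M_{n+1}^{(k_0)}-M_n^{(k_0)}\rvert^2\mid\mathcal F_n]=\lvert b_{n+1}^{(k_0)}\rvert^{-2}(1-\lvert Y_n^{(k_0)}\rvert^2/(n+1)^2)$ and $\E[(M_{n+1}^{(k_0)}-M_n^{(k_0)})^2\mid\mathcal F_n]=(b_{n+1}^{(k_0)})^{-2}\omega^{2k_0}(Y_n^{(2k_0)}/(n+1)-(Y_n^{(k_0)})^2/(n+1)^2)$. Because $\lvert b_n^{(k_0)}\rvert^2\sim n/\lvert\Gamma(1+\omega^{k_0})\rvert^2$, $\E\lvert Y_n^{(k_0)}\rvert^2=O(n\log n)$, and $2k_0=m/3$ is a small index so $\E\lvert Y_n^{(m/3)}\rvert^2=O(n)$, summation shows the conditional quadratic variation of $M^{(k_0)}$ equals $\lvert\Gamma(1+\omega^{k_0})\rvert^2\log n+O_{\mathrm{a.s.}}(1)$, while the conditional pseudo-variation $\sum_{j<n}\E[(M_{j+1}^{(k_0)}-M_j^{(k_0)})^2\mid\mathcal F_j]$ converges a.s.; conditional Lindeberg holds automatically since $\lvert M_{n+1}^{(k_0)}-M_n^{(k_0)}\rvert=O(n^{-1/2})$. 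Hence $M_n^{(k_0)}/\sqrt{\log n}\stackrel{d}{\longrightarrow}\lvert\Gamma(1+\omega^{k_0})\rvert Z$, a rotationally invariant limit. Now $Y_n^{(k_0)}/\sqrt{n\log n}=(b_n^{(k_0)}/\sqrt n)(M_n^{(k_0)}/\sqrt{\log n})$ with $b_n^{(k_0)}/\sqrt n=n^{i\mu_{k_0}}/\Gamma(1+\omega^{k_0})+o(1)$; the deterministic, non-convergent unimodular prefactor $n^{i\mu_{k_0}}\lvert\Gamma(1+\omega^{k_0})\rvert/\Gamma(1+\omega^{k_0})$ can be discarded in the limit because $Z$ is rotationally invariant (a Skorokhod coupling makes this rigorous), so $Y_n^{(k_0)}/\sqrt{n\log n}\stackrel{d}{\longrightarrow}Z$ and the continuous mapping theorem gives the displayed convergence. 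A one-line second-moment computation using $\overline{v_{k_0}}=v_{m-k_0}$, $\E Z^2=0$, $\E\lvert Z\rvert^2=1$ then identifies the law of $2\Re(Zv_{k_0})$ as $\mathcal N(0,v_{m/6}v_{m/6}^*+v_{5m/6}v_{5m/6}^*)$, whose rank is $2$ because $2\Re(Zv_{k_0})$ ranges over $\mathrm{span}_{\R}\{\Re v_{m/6},\Im v_{m/6}\}$, and $v_{m/6}$, $v_{5m/6}=\overline{v_{m/6}}$ are distinct (hence $\C$-linearly independent) eigenvectors.

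I expect the critical regime to be the main obstacle, in two respects. First, the a priori $L^2$ estimates $\E\lvert Y_n^{(m/6)}\rvert^2=O(n\log n)$ and $\E\lvert Y_n^{(m/3)}\rvert^2=O(n)$ must be secured before the conditional bracket of $M^{(m/6)}$ can be identified as asymptotic to $\lvert\Gamma(1+\omega^{m/6})\rvert^2\log n$ with the pseudo-bracket of strictly smaller order — this is precisely what forces the $\sqrt{\log n}$ normalization and makes the limit a \emph{proper} complex Gaussian. Second, the oscillatory factor $n^{i\mu_{m/6}}$ genuinely does not converge, and the resolution is the structural point that it becomes irrelevant in the limit exactly because the limiting Gaussian is rotationally invariant. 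Everything else — the spectral decoupling, the $L^2$-convergence with rate of the large-index martingales, and above all the disappearance of all small-index modes after the extra factor $\sqrt{\log n}$ (which is the mechanism behind the drop of the limiting covariance to rank $2$) — should be routine given the increment estimates.
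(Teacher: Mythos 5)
Your argument is correct in substance, but it reaches Theorem \ref{thm2} by a genuinely different route from the paper. The paper obtains Theorem \ref{thm2} as a corollary of Proposition \ref{prop1}, a \emph{joint} CLT for the normalized residuals of all spectral projections, which is proved via the embedding into a random binary search tree, the recursions (\ref{basic_rec}) and (\ref{rec_xik}), and the contraction method with the Zolotarev metric $\zeta_3$ as in \cite{ne14}; Theorem \ref{thm2} then follows by Slutzky because the extra $\sqrt{\log n}$ suppresses every mode except the critical pair $k\in\{m/6,5m/6\}$. You exploit exactly this last observation to bypass the joint CLT altogether: after diagonalizing, you only need $L^2$ rates --- $\|M_{n,k}-\Xi_k\|_2=\mathrm{O}(n^{1/2-\lambda_k})$ for the large modes (the paper's Lemma \ref{mgconv}) and the crude $\mathrm{O}(n)$, $\mathrm{O}(n\log n)$ second-moment bounds for the small and critical modes --- to conclude that everything except the critical projection is $o(\sqrt{n\log n})$ in $L^2$, and the only distributional input is a CLT for the single critical martingale, which you prove directly by a martingale CLT (bracket $\sim|\Gamma(1+\omega^{m/6})|^2\log n$, absolutely convergent pseudo-bracket forcing a rotationally invariant complex Gaussian limit, conditional Lindeberg trivial from the $\mathrm{O}(n^{-1/2})$ increments). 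This is more elementary and self-contained for Theorem \ref{thm2} --- indeed the critical-mode CLT is essentially what the paper itself quotes from \cite{Ja04,mai14} in (\ref{rn2}) --- but it buys none of the paper's main new content: it yields neither Theorem \ref{thm1}, where all large-mode residuals contribute and joint convergence with asymptotic independence is indispensable, nor the convergence of all moments mentioned after the theorems. Two points you should tighten: state the precise bivariate martingale CLT you invoke, verifying that the conditional covariance matrix of $(\Re M_{n,m/6},\Im M_{n,m/6})$, normalized by $\log n$, tends a.s.\ to $\tfrac12|\Gamma(1+\omega^{m/6})|^2\,\mathrm{Id}_2$ precisely because the pseudo-bracket converges absolutely a.s.; and replace the appeal to a Skorokhod coupling for discarding the non-convergent unimodular factor $n^{i\mu_{m/6}}$ by the standard subsequence argument (along any subsequence with $n^{i\mu_{m/6}}\to e^{i\theta}$ the limit law is that of $e^{i\theta}$ times a rotationally invariant Gaussian, hence does not depend on $\theta$). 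Your covariance and rank-two identification agrees with the paper's Theorem \ref{thm_rank}.
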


The convergences  in Theorems \ref{thm1} and \ref{thm2} also hold with all moments. For an expansion of $\E[R_n]$ see (\ref{exp_mean}).

We consider Theorems \ref{thm1} and \ref{thm2} as prototypical for a phenomenon which we conjecture to occur frequently in related random combinatorial structures. E.g., we expect similar behavior for the size of random $m$-ary search trees, cf.~\cite{chhw01,chpo04,FiKa04}, and for the number of leaves in random $d$-dimensional (point) quadtrees \cite{chfuhw07}. (For both instances only the case of Theorem \ref{thm1} is expected to occur.)

\section{Outline of the proof}
In this section we first recall some known asymptotic behavior of $R_n$ which is used subsequently. Then we state a more refined result on certain projections of residuals in Proposition \ref{prop1} which directly implies Theorems \ref{thm1} and \ref{thm2}. Then, an outline of the proof of Proposition \ref{prop1} is given. Technical steps and estimates are then sketched in Section \ref{sec:3}. Throughout, we fix an $m\ge 7$.

The cyclic urn with $m$ colours has the $m\times m$ replacement matrix
\begin{equation} \label{R}
{\cal R}:=
\begin{pmatrix}
0 & 1 & 0& \cdot & \cdot & 0 & 0 \\
0 & 0 & 1& \cdot & \cdot & 0 & 0 \\
0 & 0 & 0& \cdot & \cdot & \cdot & \cdot \\
\cdot & \cdot &  \cdot & \cdot & \cdot & \cdot & \cdot \\
\cdot & \cdot & \cdot & \cdot & \cdot & 0 & 1 \\
1 & 0 & 0& \cdot & \cdot & 0 & 0
\end{pmatrix},
\end{equation}
where ${\cal R}_{ij}$ indicates that after drawing a ball of type $i$ it is placed back together with ${\cal R}_{ij}$ balls of type $j$ for all $0\le i,j\le m-1$.
For the urn we consider the initial configuration of one ball of type $0$ and write $R_n$ for the composition vector after $n$ steps. The canonical filtration is given by the $\sigma$-fields ${\cal F}_n=\sigma(R_0,\ldots,R_n)$ for $n\ge 0$.
The dynamics of the urn process imply that, almost surely, we have
\begin{align}\label{bed_erw}
\mathbb{E}\left[R_{n+1} \,|\, \mathcal{F}_n \right]
= \sum_{k=0}^{m-1} \frac{R_{n,k}}{n+1} (R_{n} + {\cal R}^t e_k )
= \left(\mathrm{Id}_m + \frac{1}{n+1}{\cal R}^t \right)R_n,\qquad n\ge 0.
\end{align}
Here, $\mathrm{Id}_m$ denotes the $m\times m$ identity matrix and ${\cal R}^t$ the transpose of ${\cal R}$. The matrices ${\cal R}$ and $\mathrm{Id}_m + \frac{1}{n+1}{\cal R}^t$ have the same (right) eigenvectors $v_0,\ldots,v_{m-1}$ given in (\ref{eig_vek}).

Note that $v_0$ has the direction of the drift vector $\mathbf{1}$ in Theorems \ref{thm1} and \ref{thm2} and $v_1$ determines  the directions of the a.s.~fluctuations around the drift there. By diagonalizing these matrices and using  (\ref{bed_erw}) one finds explicit expressions for the mean of the $R_n$, cf.~\cite[Lemma 6.7]{knne14}.
With
\begin{equation*}
\xi_k := \frac{2}{\Gamma(1+\omega^k)}v_k,\quad 1\le k \le r,
\end{equation*}
these expressions imply the expansion, as $n\to\infty$,
\begin{equation}\label{exp_mean}
\mathbb{E}\left[R_n\right] = \frac{n+1}{m}\mathbf{1} +\sum_{k=1}^r \Re(n^{i\mu_k}\xi_k)n^{\lambda_k} +   \mathrm{O}(\sqrt{n}).
\end{equation}
It is also known  that the variances and covariances of $R_n$ are of the order $n^{2\lambda_1}$ with  appropriate periodic prefactors. This explains the normalization $n^{-\lambda_1}(R_n-\frac{n+1}{m}\mathbf{1})$ in Theorems \ref{thm1} and \ref{thm2}.
The analysis of the asymptotic distribution as stated in (\ref{strong}) has been done by different techniques (partly only in a weak sense), by embedding into continuous time multitype branching processes, by (more direct) use of martingale arguments, and by stochastic fixed-point arguments, see \cite{Ja04,Pou05,knne14}.

For our further analysis we use a spectral decomposition of the process $(R_n)_{n\ge 0}$. We denote by $\pi_k$ the projection onto the eigenspace in $\C^m$ spanned by $v_k$ for $0\le k\le m-1$. Hence, we have
\begin{align*}
R_n=\sum_{k=0}^{m-1} \pi_k(R_n)=\pi_0(R_n)+\sum_{k=1}^{\lfloor m/2\rfloor} (\pi_{k}+\pi_{m-k})(R_n) + \ind_{\{ m \mbox{ even}\}}\pi_{m/2}(R_n),
\end{align*}
where $\ind$ indicates an indicator. We have deterministically $\pi_0(R_n)=\frac{n+1}{m}\mathbf{1}$. For the other projections $\pi_k(R_n)$  one has  similar periodic behavior as for the composition vector $R_n$, cf.~(\ref{strong}), as long as we have $\lambda_k>\frac{1}{2}$. We call the projections $\pi_k(R_n)$  large, if $\lambda_k>\frac{1}{2}$, since their  magnitudes have orders larger than $\sqrt{n}$. Projections $\pi_k$ with $\lambda_k\le \frac{1}{2}$ we call small. For the large projections we have for all $1\le k\le \lfloor m/2\rfloor$ with $\lambda_k>\frac{1}{2}$
almost surely that
\begin{align} \label{grenz}
Y_{n,k}:=\frac{1}{n^{\lambda_k}}(\pi_k+\pi_{m-k})(R_n - \E[R_n]) - 2\Re\left(n^{\mathrm{i}\mu_k}\Xi_k v_k\right) \to 0
\end{align}
with a complex valued random variable $\Xi_k$. The small projections $\pi_k(R_n)$ behave differently, see \cite{Ja04,mai14}. For those $k$ with $\lambda_k<\frac{1}{2}$ we have
\begin{align}\label{rn1}
X_{n,k}:=\frac{1}{\sqrt{n}}(\pi_k+\pi_{m-k})(R_n- \E[R_n]) \stackrel{\mathrm{d}}{\longrightarrow} {\cal N}(0,\Sigma_k),
\end{align}
with an appropriate covariance matrix $\Sigma_k$, see (\ref{sig_k_def1})--(\ref{sig_k_def3}).

If $m$ is even then for $X_{n,m/2}:=n^{-1/2}\pi_{m/2}(R_n)$ we have a multivariate CLT as in (\ref{rn1}).

Finally, if $6 \mid m$, then there is the pair $(\frac{m}{6},\frac{5m}{6})$ with $\lambda_{m/6}=\lambda_{5m/6}=\frac{1}{2}$. In this case the scaling requires an additional $\sqrt{\log n}$ factor. We have
\begin{align}\label{rn2}
X_{n,m/6}:=\frac{1}{\sqrt{n\log n}}(\pi_{m/6}+\pi_{5m/6})(R_n - \E[R_n]) \stackrel{\mathrm{d}}{\longrightarrow} {\cal N}(0,\Sigma_{m/6}).
\end{align}
We identify the orders of the variances and covariances of $Y_{n,k}$ in Section \ref{sec:31}. These orders imply that an appropriate normalization to study the fluctuations of the large projections is given by
\begin{align}\label{rn3}
X_{n,k}:= n^{\lambda_k-\frac{1}{2}} Y_{n,k}.
\end{align}
Now, the $X_{n,k}$ are defined for all $1\le k\le \lfloor m/2 \rfloor$ and describe the normalized fluctuations of all the projections.
For the small projections we already know that they are asymptotically normally distributed, see (\ref{rn1}). As a main contribution of the present paper we show that the residuals of the large projections as normalized in (\ref{rn3}) are also asymptotically normal. Moreover, we show that all these fluctuations are jointly asymptotically normally distributed and asymptotically independent:

\begin{prop}\label{prop1}
For the vector $(X_{n,1}, \ldots, X_{n,\lfloor m/2 \rfloor})$ defined in
(\ref{rn1})  - (\ref{rn3}) we have
\begin{align*}
(X_{n,1},\ldots,X_{n,\lfloor m/2 \rfloor}) \stackrel{d}{\longrightarrow} {\cal N}(0,\mathrm{diag}(\Sigma_1,\ldots,\Sigma_{\lfloor m/2 \rfloor})),
\end{align*}
where the blocks $\Sigma_k$ of the diagonal block matrix $ \mathrm{diag}(\Sigma_1,\ldots,\Sigma_{\lfloor m/2 \rfloor})$ are defined in (\ref{sig_k_def1})--(\ref{sig_k_def3}).
\end{prop}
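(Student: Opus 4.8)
The plan is to establish Proposition~\ref{prop1} via a single unified martingale / stochastic fixed-point analysis of the projected processes, following the approach of \cite{knne14} but pushed one order further. First I would write each projection $\pi_k(R_n)$ in terms of its own eigenvalue $1+\omega^k/(n+1)$: from (\ref{bed_erw}) the process $M_{n,k}:=\pi_k(R_n)/\prod_{j=1}^{n}(1+\omega^k/j)$ is a (vector-valued, complex) martingale, and $\prod_{j=1}^n(1+\omega^k/j)\sim n^{\omega^k}/\Gamma(1+\omega^k)$. For the large projections ($\lambda_k>1/2$) this martingale converges a.s.\ and in $L^2$ to a limit proportional to $\Xi_k$, which is exactly (\ref{grenz}); the residual $Y_{n,k}$ after rescaling by $n^{\lambda_k-1/2}$ is, up to the $n^{\mathrm{i}\mu_k}$ rotation, essentially $n^{\lambda_k-1/2}\cdot n^{-\lambda_k}\cdot(\text{martingale tail from step }n\text{ to }\infty)$, i.e.\ $\sqrt n$ times the martingale increment sum $\sum_{j>n}(M_{j,k}-M_{j-1,k})$. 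For the small projections ($\lambda_k<1/2$) and the borderline pair ($\lambda_k=1/2$, only when $6\mid m$) these are the already-known statements (\ref{rn1})--(\ref{rn2}) from \cite{Ja04,mai14}, so the genuinely new content is the large-projection residuals.

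Second, I would compute the conditional covariance structure of the one-step increments. The martingale difference $\Delta_{n+1}:=R_{n+1}-\E[R_{n+1}\mid\mathcal F_n]$ has conditional covariance $\E[\Delta_{n+1}\Delta_{n+1}^t\mid\mathcal F_n]=\sum_k \frac{R_{n,k}}{n+1}(R_n+\mathcal R^t e_k)(R_n+\mathcal R^t e_k)^t - (\text{outer product of the mean})$, and since $R_{n,k}/(n+1)\to 1/m$ a.s.\ for each $k$, this conditional covariance is asymptotically $\frac1{n+1}B$ for an explicit deterministic $m\times m$ matrix $B$ (plus lower-order fluctuating terms coming from the large projections, which must be controlled). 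Projecting onto $v_k$ and rescaling, the bracket of the martingale $\sqrt n\sum_{j>n}(M_{j,k}-M_{j-1,k})$ converges (after the appropriate deterministic time change) to a constant multiple of $v_kv_k^*$; the constant is $1/|2\lambda_k-1|$, arising from $\sum_{j>n} j^{-2\lambda_k}\cdot j^{-1}\cdot(\text{normalizing }n^{2\lambda_k-1})\to 1/(2\lambda_k-1)$ when $\lambda_k>1/2$. This is where the $|2\lambda_k-1|$ denominators and the $\sqrt{\log n}$ correction (when some $\lambda_k=1/2$, the sum $\sum j^{-1}$ diverges logarithmically) come from, matching $\Sigma^{(m)}$ in Theorems~\ref{thm1} and~\ref{thm2} once one collects $v_kv_k^*+v_{m-k}v_{m-k}^* = 2\Re(\cdots)$ into the real $2\times(m-1)/2$-dimensional blocks $\Sigma_k$.

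Third, for the joint convergence and asymptotic independence I would apply a martingale central limit theorem (e.g.\ the Lindeberg-type CLT for arrays of martingale differences) to the $\R^m$-valued process obtained by stacking all the rescaled residuals simultaneously. The key input is that the joint conditional covariance of the increments, after the various (projection-dependent) time-changes, converges to a block-diagonal limit: cross terms between projections $\pi_k$ and $\pi_\ell$ for $k\neq\ell,m-\ell$ involve $v_k^* B v_\ell$ against oscillating factors $j^{-\lambda_k-\lambda_\ell}e^{\mathrm{i}(\mu_k-\mu_\ell)\log j}$ whose sums are $o(1)$ relative to the diagonal normalization, hence vanish in the limit — this is what produces the $\mathrm{diag}(\Sigma_1,\ldots,\Sigma_{\lfloor m/2\rfloor})$ structure and the stated asymptotic independence. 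The Lindeberg condition is easy since $\|\Delta_{n+1}\|\le 2$ deterministically (only two coordinates change by one).

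The main obstacle I expect is not the CLT machinery but the fact that the different projections live on genuinely different scales (powers $n^{\lambda_k-1/2}$ ranging from close to $n^{1/2}$ down to $n^{0^+}$, plus the $\sqrt n$ and $\sqrt{n\log n}$ scales for small and borderline projections). A single martingale CLT at one time scale will not see all of them; so the real work is to set up the correct simultaneous time-change (or, equivalently, to split the martingale sum at level $n$ and handle the ``tail from $n$ to $\infty$'' for large projections versus the ``bulk up to $n$'' for small projections in one stroke), and to show that the fluctuating part of the conditional covariance — the deviation of $R_{n,k}/(n+1)$ from $1/m$, which is itself of order $n^{\lambda_1-1}$ and hence of order $\sqrt{n}$ times the residual scale — does not contaminate the limiting bracket. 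This requires the a.s.\ approximations (\ref{strong}), (\ref{grenz}) as a priori bounds fed back into the covariance computation, a bootstrapping step that is the technical heart sketched in Section~\ref{sec:3}.
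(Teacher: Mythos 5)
Your plan is viable, but it takes a genuinely different route from the paper. The covariance analysis in your second step is essentially what the paper carries out in Section~\ref{sec:31}: Lemmas~\ref{erw}--\ref{cov_mat} and the cross-term bound (\ref{mix_est}) produce, by explicit computation along the urn dynamics (\ref{bed_erw}), exactly the constants $1/|2\lambda_k-1|$, the extra $\log n$ at $\lambda_k=\frac12$, and the asymptotic orthogonality between distinct eigenspaces that you extract from bracket heuristics. Where you diverge is the convergence proof itself: you propose a martingale CLT for the tail sums $\sum_{j>n}(M_{j,k}-M_{j-1,k})$ (large projections) together with the bulk sums up to time $n$ (small and critical projections), whereas the paper embeds the urn into a random binary search tree, derives the pointwise fixed-point equation (\ref{rec_xik}) for the $\Xi_k$ and the recurrence (\ref{basic_rec}) for the residuals, and then proves the joint convergence by the contraction method in the Zolotarev metric $\zeta_3$, extending the univariate argument of \cite{ne14} to the multivariate recurrence. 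Your route can be made to work, but two points need explicit care beyond what you wrote: first, besides the conjugated brackets you must show that the non-conjugated brackets $\sum_j \E\bigl[(M_{j,k}-M_{j-1,k})^2 \mid {\cal F}_{j-1}\bigr]$ (which involve $u_{2k}(R_{j-1})$ and are of relative order $j^{\lambda_{2k}-1}$) vanish after scaling, so that the complex limits are circularly symmetric; only then are the rotations $n^{\mathrm{i}\mu_k}$ harmless and do the real blocks in (\ref{sig_k_def1})--(\ref{sig_k_def3}) come out. Second, the joint law of the tail sums (increments with $j>n$) and the ${\cal F}_n$-measurable small projections should be obtained by proving the tail-sum CLT in the stable (mixing) sense, which then gives the asserted asymptotic independence; this is the rigorous resolution of the ``different time scales'' obstacle you flag, and the contamination of the brackets by the large-projection fluctuations is in fact only of relative order $n^{2\lambda_1-2}$, so no delicate bootstrapping is needed. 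As for what each approach buys: yours is more self-contained, avoids the fixed-point machinery, and reads the block-diagonal limit directly off the bracket computations; the paper's recursive/$\zeta_3$ approach treats all components within one self-similar scheme tied to the limits $\Xi_k$ and, in addition, yields convergence of (mixed) moments, as claimed after Theorems~\ref{thm1} and~\ref{thm2}, which a plain martingale CLT would not deliver.
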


Proposition \ref{prop1} directly implies Theorems \ref{thm1} and \ref{thm2}:

\proof[Proof of Theorem \ref{thm1}]
Let $m\ge 7$ with $6 \nmid m$, set $r=\lfloor (m-1)/6\rfloor$  and  let $\Xi_1,\ldots,\Xi_r$ as in (\ref{rn1}). Moreover, $X_{n,1},\ldots,X_{n,\lfloor m/2 \rfloor}$ as in Proposition \ref{prop1}. Note that $6 \nmid m$ implies that there is no $1\le k \le m$ with $\lambda_k=\frac{1}{2}$. We obtain 
\begin{align*}
\lefteqn{n^{\lambda_1-1/2}\left(\frac{R_n- \E[R_n]}{n^{\lambda_1}} - \sum_{k=1}^r 2n^{\lambda_k-\lambda_1}
\Re\left(n^{i\mu_k}\Xi_k v_k\right)\right)}\\
&= n^{\lambda_1-1/2}\left(n^{-\lambda_1}\sum_{k=1}^r \left\{(\pi_k+\pi_{m-k})(R_n- \E[R_n])-2n^{\lambda_k}
\Re\left(n^{i\mu_k}\Xi_k v_k\right)\right\} \right.\\
&\left.\qquad\qquad\quad ~+ n^{-\lambda_1}\sum_{r+1}^{\lceil m/2 \rceil-1}(\pi_k+\pi_{m-k})(R_n- \E[R_n]) +\ind_{\{m \mbox{ even}\}} n^{-\lambda_1}\pi_{m/2}(R_n- \E[R_n])\right)\\
&= X_{n,1}+\cdots+X_{n,\lfloor m/2 \rfloor}\\
&\stackrel{\mathrm{d}}{\longrightarrow} {\cal N}\left(0,\Sigma^{(m)}\right),
\end{align*}
by Proposition \ref{prop1} and the continuous mapping theorem, where $\Sigma^{(m)}=\Sigma_1+\cdots+\Sigma_{\lfloor m/2 \rfloor}$. That $\Sigma^{(m)}$ has rank $m-1$ is proven in Theorem \ref{thm_rank}.
\qed
\proof[Proof of Theorem \ref{thm2}]
Let $m\ge 7$ with $6 \mid m$ and $\Xi_1,\ldots,\Xi_r$ as in (\ref{rn1}) and $X_{n,1},\ldots,X_{n,m/2}$ as in Proposition \ref{prop1}. Note that $6 \mid m$ implies that there is the pair $(m/6,5m/6)$ with  $\lambda_{m/6}=\lambda_{5m/6}=\frac{1}{2}$. Rearranging terms as in the proof of Theorem \ref{thm1} we obtain
\begin{align*}
\frac{n^{\lambda_1-1/2}}{\sqrt{\log n}}\left(\frac{R_n- \E[R_n]}{n^{\lambda_1}} - \sum_{k=1}^r 2n^{\lambda_k-\lambda_1}
\Re\left(n^{i\mu_k}\Xi_k v_k\right)\right)
&= X_{n,m/6} + \frac{1}{\sqrt{\log n}}\sum_{k=1\atop k\neq m/6}^{m/2}X_{n,k}\\
&\stackrel{\mathrm{d}}{\longrightarrow} {\cal N}\left(0,\Sigma^{(m)}\right),
\end{align*}
by Proposition \ref{prop1} and Slutzky's Lemma, where $\Sigma^{(m)}=\Sigma_{m/6}$.  That $\Sigma^{(m)}$ has  rank $2$ is proven in Theorem \ref{thm_rank}.
\qed\\


To prove Proposition \ref{prop1} we first derive moments and mixed moments needed for the normalization in Section \ref{sec:31}. The ranks of the covariance matrices $\Sigma^{(m)}$ are identified in Section \ref{sec:32}. In Section \ref{sec:33} a pointwise recursive equation
 for the complex random variables $\Xi_1,\ldots,\Xi_r$ is obtained together with a recurrence for the sequence $(R_n)_{n\ge 0}$ which extends to a recurrence for the residuals in (\ref{strong}) as well as to the residuals of the projections of the $R_n$. Finally, the joint convergence of the normalized residuals of all projections is finally shown by an application of a stochastic fixed-point argument in the context of the contraction method by use of the Zolotarev metric $\zeta_3$. However, only an indication and  a solid reference are given in  Section \ref{sec:34}.



\section{Sketch of the proof of Proposition \ref{prop1}} \label{sec:3}
\subsection{Proper normalization of the residuals}\label{sec:31}
Denoting the inner product in $\C^m$ by $\langle \,\cdot\, , \, \cdot \,\rangle$ we first write the spectral decomposition of the centered composition vector with respect to the orthonormal basis $\{\sqrt{m} v_k: 0 \leq k < m\}$  of the unitary vector space $\C^m$ as
\begin{align*}
 R_n - \mathbb{E}[ R_n] = \sum_{k=0}^{m-1} \pi_k\left(R_n - \E[R_n]\right)
=:\sum_{k=0}^{m-1} u_k\left(R_n - \mathbb{E}[R_n ]\right)v_k.
\end{align*}
The evolution (\ref{bed_erw}) of the process implies that the random variables
\begin{align}
M_{n,k} := \frac{\Gamma(n+1)}{\Gamma(n+1+\omega^k)} u_k\left(R_n - \mathbb{E}\left[R_n \right]\right)
\end{align}
for $k \in \{0, \ldots, m-1\} \setminus \{m/2\}$ and
\begin{align}
M_{n,m/2} :=n \cdot  u_{m/2}\left(R_n - \mathbb{E}\left[R_n \right]\right)
\end{align}
define complex-valued, centered martingales. Note, that the corresponding martingales $M_{n,k}^{[j]}$ when starting with one ball of type $j\in\{0,\ldots,m-1\}$   satisfy
\begin{equation*}
M_{n,k}^{[j+1]} \stackrel{d}{=} \omega^{k} M_{n,k}^{[j]} \qquad \mbox{(convention } M_{n,k}^{[m]}:= M_{n,k}^{[0]}\mbox{)}.
\end{equation*}
It is known, see \cite{Ja04,Ja06,Pou05}, that for all $k \in \{0, \ldots, m-1\}$ with $\lambda_k=\Re\left(\omega^k\right)>1/2$, there exists a complex random variable $\Xi_{k}$ such that, as $n \to \infty$, we have
\begin{align}\label{mg_conv}
M_{n,k} \to \Xi_k\; \mbox{ almost surely},
\end{align}
where the convergence also holds in $\mathrm{L}_p$ for every $p \geq 1$. The $M_{n,k}$ with $\lambda_k=\Re\left(\omega^k\right)\le 1/2$ are also known to converge, after proper normalization, to normal limit laws. 

Our subsequent analysis requires asymptotics for moments of and correlations between the $u_k(R_n)$. Exploiting the dynamic of the urn in (\ref{bed_erw}) elementary calculations imply that:
\begin{lem}\label{erw}
For all $k \in \{0, \ldots, m-1\} \setminus \{m/2\}$, we have
\begin{equation*}
\mathbb{E}\left[u_k\left(R_n\right)\right] = \sum_{t=0}^{m-1} \omega^{kt} \mathbb{E}\left[R_{n,t}\right] = \frac{\Gamma(n+1+\omega^k)}{\Gamma(n+1)\Gamma(1+\omega^k)},
\end{equation*}
while
\begin{equation*}
\mathbb{E}\left[u_{m/2}\left(R_n\right)\right] = 0.
\end{equation*}
For all $k, \ell \in \{0, \ldots, m-1\}$,
\begin{align*}
 \mathbb{E}\left[u_{k}\left(R_n\right)u_{\ell}\left(R_n\right)\right]
=&\prod_{s=1}^{n}\left(1+\frac{\omega^{k}+\omega^{\ell}}{s}\right) \\
&~+ \omega^{k+\ell}\sum_{s=1}^{n}\frac{1}{s}\prod_{t=1}^{s-1}\left(1+ \frac{\omega^{k+\ell}}{t} \right)\prod_{t=s+1}^{n}\left(1+\frac{\omega^{k}+\omega^{\ell}}{t}\right).
\end{align*}
\end{lem}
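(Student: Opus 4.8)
The plan is to derive both formulas from the conditional expectation identity (\ref{bed_erw}) by turning it into an exact, unconditional recursion for the first and second mixed moments of the coordinates $u_k(R_n)$, and then solving those recursions in closed form. Throughout I use that $v_0,\ldots,v_{m-1}$ are simultaneous right eigenvectors of $\mathrm{Id}_m+\frac{1}{n+1}\mathcal R^t$, with $\mathcal R^t v_k = \omega^k v_k$; equivalently, writing $u_k(x)=\langle x,\sqrt m\, v_k\rangle$ up to normalization, the linear functional $u_k$ satisfies $u_k(\mathcal R^t y)=\omega^k u_k(y)$.

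For the mean, I would apply $u_k$ to both sides of (\ref{bed_erw}) and take expectations. Since $u_k$ is linear and $u_k(\mathcal R^t R_n)=\omega^k u_k(R_n)$, this gives the scalar recursion $\E[u_k(R_{n+1})]=\bigl(1+\frac{\omega^k}{n+1}\bigr)\E[u_k(R_n)]$, with initial value $u_k(R_0)=u_k(e_0)=1$ for the normalization in (\ref{eig_vek}). Iterating yields $\E[u_k(R_n)]=\prod_{s=1}^n\bigl(1+\frac{\omega^k}{s}\bigr)=\prod_{s=1}^n\frac{s+\omega^k}{s}=\frac{\Gamma(n+1+\omega^k)}{\Gamma(1+\omega^k)\Gamma(n+1)}$, which is the claimed identity; the middle expression $\sum_t\omega^{kt}\E[R_{n,t}]$ is just the definition of $u_k$ applied to $\E[R_n]$. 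For $k=m/2$ (when $m$ is even) one has $\omega^{m/2}=-1$, so the product telescopes to $\prod_{s=1}^n\frac{s-1}{s}=0$, giving $\E[u_{m/2}(R_n)]=0$.

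For the second moments the main step is a recursion for $\E[u_k(R_{n+1})u_\ell(R_{n+1})]$. Conditioning on $\mathcal F_n$, at step $n+1$ we draw type $t$ with probability $R_{n,t}/(n+1)$ and replace $R_n$ by $R_n+\mathcal R^t e_t$; hence $u_k(R_{n+1})=u_k(R_n)+\omega^k(\text{indicator contribution})$, and more precisely $\E[u_k(R_{n+1})u_\ell(R_{n+1})\mid\mathcal F_n]$ expands into $u_k(R_n)u_\ell(R_n)$, two cross terms of the form $\frac{\omega^k}{n+1}u_\ell(R_n)\sum_t R_{n,t}\omega^{kt}/(\text{norm})$ which reassemble (using $\sum_t R_{n,t}(\mathcal R^t e_t)=\mathcal R^t R_n$) into $\frac{\omega^k+\omega^\ell}{n+1}u_k(R_n)u_\ell(R_n)$, plus a quadratic remainder term. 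That remainder, coming from the single added ball, contributes $\frac{1}{n+1}\sum_t \frac{R_{n,t}}{\|\cdot\|}\,\omega^k{}^{?}\cdots$; the key simplification is that $u_k(\mathcal R^t e_t)u_\ell(\mathcal R^t e_t)=\omega^{k+\ell}$ is independent of $t$, so summing against $R_{n,t}/(n+1)$ and using $\sum_t R_{n,t}=n+1$ collapses the remainder to exactly $\frac{\omega^{k+\ell}}{n+1}$. Taking expectations gives the clean linear recursion
\begin{equation*}
\E[u_k(R_{n+1})u_\ell(R_{n+1})] = \Bigl(1+\frac{\omega^k+\omega^\ell}{n+1}\Bigr)\E[u_k(R_n)u_\ell(R_n)] + \frac{\omega^{k+\ell}}{n+1}.
\end{equation*}
This is a first-order inhomogeneous recursion; solving it by the standard variation-of-constants formula (multiply by the inverse of the homogeneous solution $\prod_{s=1}^n(1+\frac{\omega^k+\omega^\ell}{s})$, sum the inhomogeneous increments, with initial value $u_k(R_0)u_\ell(R_0)=1$) produces exactly the stated two-term expression: the homogeneous part $\prod_{s=1}^n(1+\frac{\omega^k+\omega^\ell}{s})$ and the sum $\omega^{k+\ell}\sum_{s=1}^n\frac1s\prod_{t=1}^{s-1}(1+\frac{\omega^{k+\ell}}{t})\prod_{t=s+1}^n(1+\frac{\omega^k+\omega^\ell}{t})$, where the inner product up to $s-1$ arises from the value of $\E[u_k(R_s)u_\ell(R_s)]$ already having absorbed its own inhomogeneous history — one checks the telescoping works because $\omega^{k+\ell}$ appears both as the added-ball weight and (once) as the homogeneous rate seen by the accumulated inhomogeneity up to time $s$.

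The main obstacle — and the only genuinely non-mechanical point — is the bookkeeping of the remainder term in the second-moment recursion: one must verify carefully that the ``square of the single new ball'' contributes the constant $\omega^{k+\ell}$ (not something $t$-dependent) and that the cross terms recombine into $\omega^k+\omega^\ell$ rather than leaving stray pieces, i.e.\ that $u_k$ really is an eigen-functional of $\mathcal R^t$ and that $u_k(\mathcal R^t e_t)=\omega^k$ for every $t$ by the cyclic structure of $\mathcal R$. Once the recursion is in the displayed clean form, the rest is the routine solution of a linear recursion and identification of the products with ratios of Gamma functions; I would present the remainder-term identity as the one computational lemma worth spelling out and relegate the summation step to ``elementary calculations'' as the authors do.
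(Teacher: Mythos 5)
Your first-moment argument is fine and is surely the ``elementary calculation'' the paper has in mind: $u_k$ is an eigen-functional of $\mathcal{R}^t$, so (\ref{bed_erw}) gives $\mathbb{E}[u_k(R_{n+1})]=(1+\frac{\omega^k}{n+1})\mathbb{E}[u_k(R_n)]$ with $u_k(R_0)=1$, and the product telescopes into the Gamma ratio, with the factor $1+\omega^{m/2}=0$ handling $k=m/2$. The problem is in the second-moment recursion, at exactly the step you single out as the key computational lemma. Drawing a ball of type $t$ adds $\mathcal{R}^te_t=e_{(t+1)\bmod m}$, so by your own eigen-relation $u_k(\mathcal{R}^te_t)=\omega^k u_k(e_t)=\omega^{k(t+1)}$, and hence $u_k(\mathcal{R}^te_t)u_\ell(\mathcal{R}^te_t)=\omega^{(k+\ell)(t+1)}$, which is \emph{not} independent of $t$ (unless $k+\ell\equiv 0 \bmod m$, and even then your constant $\omega^{k+\ell}$ is not what the average produces). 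Averaging against $R_{n,t}/(n+1)$ gives $\frac{\omega^{k+\ell}}{n+1}\,u_{(k+\ell)\bmod m}(R_n)$, so the correct recursion is
\begin{equation*}
\mathbb{E}\left[u_k(R_{n+1})u_\ell(R_{n+1})\right]=\left(1+\frac{\omega^k+\omega^\ell}{n+1}\right)\mathbb{E}\left[u_k(R_n)u_\ell(R_n)\right]+\frac{\omega^{k+\ell}}{n+1}\,\mathbb{E}\left[u_{k+\ell}(R_n)\right],
\end{equation*}
with the index $k+\ell$ taken mod $m$. It is the \emph{first} part of the lemma, applied to this index, that turns the inhomogeneity at step $s$ into $\frac{\omega^{k+\ell}}{s}\prod_{t=1}^{s-1}\bigl(1+\frac{\omega^{k+\ell}}{t}\bigr)$ (the product form also covers $k+\ell\equiv m/2$), and then variation of constants yields the stated two-term formula verbatim.

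In other words, the inner product $\prod_{t=1}^{s-1}(1+\omega^{k+\ell}/t)$ is the mean of the projection $u_{k+\ell}(R_{s-1})$ onto a third eigendirection; it does not arise from any ``absorbed inhomogeneous history'' or telescoping of your constant-inhomogeneity recursion. Solving your recursion literally gives $\prod_{s=1}^n\bigl(1+\frac{\omega^k+\omega^\ell}{s}\bigr)+\omega^{k+\ell}\sum_{s=1}^n\frac{1}{s}\prod_{t=s+1}^n\bigl(1+\frac{\omega^k+\omega^\ell}{t}\bigr)$, which is already wrong at $n=2$: a direct check starting from $R_1=e_0+e_1$ shows the true value differs from yours by $\omega^{2(k+\ell)}/2$. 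So the overall strategy (exact moment recursions from (\ref{bed_erw})) is the right one and matches the paper's indication, but the proof of the mixed-moment formula has a genuine hole at the remainder term; fixing it requires identifying the remainder as $\frac{\omega^{k+\ell}}{n+1}u_{k+\ell}(R_n)$ and feeding in its expectation from the first part of the lemma.
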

From Lemma \ref{erw}  we obtain the $\mathrm{L}_2$-distance of the residuals of the martingales $(M_{n,k})_{n\ge 0}$ with $\lambda_k>\frac{1}{2}$ needed for the proper normalization of these residuals:
\begin{lem} \label{mgconv}
For $k \geq 1$ such that $\lambda_k >1/2$, as $n \to \infty$, we have
\begin{equation*}
 \E\left[\left|M_{n,k} - \Xi_k\right|^2\right] \sim \frac{1}{2 \lambda_k -1}n^{1-2\lambda_k}.
\end{equation*}
\end{lem}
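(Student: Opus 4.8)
The plan is to convert the statement into an \emph{exact} evaluation of $\E\big[|M_{n,k}|^2\big]$ by means of Lemma~\ref{erw}, and then to read off the leading order of $\E\big[|\Xi_k|^2\big]-\E\big[|M_{n,k}|^2\big]$. The first observation is a reduction: since $\lambda_k>1/2$, the complex martingale $(M_{n,k})_{n\ge 0}$ is bounded in $\mathrm{L}_2$ and converges to $\Xi_k$ in $\mathrm{L}_2$ by (\ref{mg_conv}). Orthogonality of increments holds verbatim in the complex case, because conditioning on $\mathcal{F}_n$ gives $\E\big[M_{n,k}\,\overline{\Xi_k-M_{n,k}}\,\big]=\E\big[M_{n,k}\,\overline{\E[\Xi_k-M_{n,k}\mid\mathcal{F}_n]}\,\big]=0$, whence
\begin{equation*}
\E\big[|M_{n,k}-\Xi_k|^2\big]=\E\big[|\Xi_k|^2\big]-\E\big[|M_{n,k}|^2\big],
\end{equation*}
so it suffices to compute $\E\big[|M_{n,k}|^2\big]$ and its limit.

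Next I would compute $\E\big[|M_{n,k}|^2\big]$ exactly. Since $R_n$ is real one has $\overline{u_k(R_n)}=u_{m-k}(R_n)$, and since $\overline{\Gamma(n+1+\omega^k)}=\Gamma(n+1+\omega^{m-k})$ we get $\overline{M_{n,k}}=M_{n,m-k}$, so that
\begin{equation*}
\E\big[|M_{n,k}|^2\big]=\frac{\Gamma(n+1)^2}{|\Gamma(n+1+\omega^k)|^2}\Big(\E[u_k(R_n)u_{m-k}(R_n)]-\E[u_k(R_n)]\,\E[u_{m-k}(R_n)]\Big).
\end{equation*}
Now insert Lemma~\ref{erw} with $\ell=m-k$, using $\omega^k+\omega^{m-k}=2\lambda_k$ and $\omega^{k+(m-k)}=1$. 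The inner products telescope, $\prod_{t=1}^{s-1}(1+1/t)=s$, which cancels the factor $1/s$; together with $\prod_{t=a+1}^{n}(1+2\lambda_k/t)=\frac{\Gamma(n+1+2\lambda_k)\Gamma(a+1)}{\Gamma(n+1)\Gamma(a+1+2\lambda_k)}$ and the product formula for $\E[u_k(R_n)]\E[u_{m-k}(R_n)]$, one obtains after cancellation the closed form
\begin{equation*}
\E\big[|M_{n,k}|^2\big]=b_n\bigg(\frac{1}{\Gamma(1+2\lambda_k)}+\sum_{s=1}^{n}\frac{\Gamma(s+1)}{\Gamma(s+1+2\lambda_k)}\bigg)-\frac{1}{|\Gamma(1+\omega^k)|^2},\qquad b_n:=\frac{\Gamma(n+1)\Gamma(n+1+2\lambda_k)}{|\Gamma(n+1+\omega^k)|^2}.
\end{equation*}

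Finally I would pass to the limit and extract the asymptotics. Because $\Gamma(s+1)/\Gamma(s+1+2\lambda_k)\sim s^{-2\lambda_k}$ with $2\lambda_k>1$, the series $C_k:=\sum_{s\ge 1}\Gamma(s+1)/\Gamma(s+1+2\lambda_k)$ converges, and since $2\lambda_k=\omega^k+\omega^{m-k}$ the leading exponent in $b_n$ vanishes, so Stirling's ratio asymptotics give $b_n\to 1$; hence $\E[|\Xi_k|^2]=\Gamma(1+2\lambda_k)^{-1}+C_k-|\Gamma(1+\omega^k)|^{-2}$. Subtracting,
\begin{equation*}
\E\big[|\Xi_k|^2\big]-\E\big[|M_{n,k}|^2\big]=(1-b_n)\bigg(\frac{1}{\Gamma(1+2\lambda_k)}+\sum_{s=1}^{n}\frac{\Gamma(s+1)}{\Gamma(s+1+2\lambda_k)}\bigg)+\sum_{s=n+1}^{\infty}\frac{\Gamma(s+1)}{\Gamma(s+1+2\lambda_k)}.
\end{equation*}
The tail is $\sim\sum_{s>n}s^{-2\lambda_k}\sim\frac{1}{2\lambda_k-1}\,n^{1-2\lambda_k}$ by integral comparison, while the Stirling expansion yields $1-b_n=\mathrm{O}(1/n)$ and the bracket converges; thus the first summand is $\mathrm{O}(1/n)$, and since $k\ge 1$ forces $\lambda_k<1$, i.e.\ $1-2\lambda_k>-1$, it is $o\big(n^{1-2\lambda_k}\big)$. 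Therefore $\E\big[|M_{n,k}-\Xi_k|^2\big]\sim\frac{1}{2\lambda_k-1}\,n^{1-2\lambda_k}$.

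The main obstacle is the exact simplification in the middle step: matching the telescoping products coming out of Lemma~\ref{erw} against the Gamma-quotient prefactor and, in particular, verifying $b_n\to 1$ — equivalently, that the exponent $2\lambda_k-\omega^k-\omega^{m-k}$ is $0$ — which is precisely where the conjugate-pair structure $\{v_k,v_{m-k}\}$ enters. The remaining pieces (the tail estimate and the rate $1-b_n=\mathrm{O}(1/n)$) are routine Stirling and integral comparisons, and one checks that the positive tail term genuinely dominates so that the equivalence $\sim$ is non-degenerate.
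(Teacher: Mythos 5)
Your proposal is correct and follows essentially the route the paper indicates: it obtains the $\mathrm{L}_2$-distance from the exact mixed second moments of Lemma~\ref{erw} applied with $\ell=m-k$, combined with the martingale orthogonality $\E\left[|M_{n,k}-\Xi_k|^2\right]=\E\left[|\Xi_k|^2\right]-\E\left[|M_{n,k}|^2\right]$ and Stirling-type asymptotics for the Gamma quotients. The key cancellation you isolate (the exponent $\omega^k+\omega^{m-k}-2\lambda_k=0$, giving $b_n\to 1$ with $1-b_n=\mathrm{O}(1/n)$, so that the tail $\sum_{s>n}\Gamma(s+1)/\Gamma(s+1+2\lambda_k)\sim n^{1-2\lambda_k}/(2\lambda_k-1)$ dominates) is exactly what makes the stated equivalence come out.
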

Lemma \ref{mgconv} directly implies the asymptotic covariances of the residuals of the centered projections of the composition vector, which we denote by
\begin{equation*}
\Pi_{n,k}:=\left\{
 \begin{array}{cl}
 \frac{\Gamma(n+1+\omega^k)}{\Gamma(n+1)}\left(M_{n,k} - \Xi_k\right)v_k, &\mbox{if } \lambda_k>\frac{1}{2},\vspace{2mm}\\
 u_k\left( R_n - \E\left[ R_n\right]\right)v_k, &\mbox{if } \lambda_k\le\frac{1}{2}.
 \end{array} \right.
\end{equation*}
Note that this notation implies the representation
\begin{equation*}
(R_n-\E[R_n]) - \sum_{k\geq 1:\; \lambda_k>1/2} \frac{\Gamma(n+1+\omega^k)}{\Gamma(n+1)}\Xi_k v_k
= \sum_{k=1}^{m-1} \Pi_{n,k}.
\end{equation*}
Lemma \ref{mgconv} implies:
\begin{lem}\label{cov_mat}
 For all $ k\in \{1,\ldots,m-1\}\setminus\{\frac{m}{6},\frac{5m}{6}\}$, as $n \to \infty$, we have
\begin{align}
 \Cov\left(\Pi_{n,k}\right) \sim \frac{1}{|2 \lambda_k -1|} n \cdot v_k v_k^*.\label{cov_mat1}
\end{align}
If $6 \mid m$, then
\begin{align}
 \Cov\left(\Pi_{n,m/6}\right) \sim  n \log(n)\cdot v_{m/6} v_{m/6}^*,\label{cov_mat2}\quad
 \Cov\left(\Pi_{n,5m/6}\right) \sim  n \log(n)\cdot v_{5m/6} v_{5m/6}^*.
\end{align}
\end{lem}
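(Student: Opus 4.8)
# Proof Plan for Lemma \ref{cov_mat}

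The plan is to reduce the statement to a scalar computation. Each $\Pi_{n,k}$ is a scalar multiple of the \emph{fixed} vector $v_k$: write $\Pi_{n,k}=W_{n,k}v_k$ with $W_{n,k}:=u_k(R_n-\E[R_n])$ when $\lambda_k\le 1/2$ and $W_{n,k}:=\frac{\Gamma(n+1+\omega^k)}{\Gamma(n+1)}(M_{n,k}-\Xi_k)$ when $\lambda_k>1/2$; here $W_{n,k}$ is a complex scalar random variable (real when $k=m/2$) with $\E[W_{n,k}]=0$, because $u_k$ is linear when $\lambda_k\le 1/2$, and because $M_{n,k}$ is a centered martingale when $\lambda_k>1/2$, so $\E[\Xi_k]=0$ by the $\mathrm{L}_2$-convergence in (\ref{mg_conv}). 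Hence $\E[\Pi_{n,k}]=0$ and $\Cov(\Pi_{n,k})=\E\!\left[|W_{n,k}|^2\right]v_kv_k^*$, so $\Cov(\Pi_{n,k})$ is automatically a scalar multiple of $v_kv_k^*$, and it remains to show $\E[|W_{n,k}|^2]\sim n/|2\lambda_k-1|$ for $k\notin\{m/6,5m/6\}$ and $\E[|W_{n,k}|^2]\sim n\log n$ for $k\in\{m/6,5m/6\}$ when $6\mid m$.

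For $\lambda_k>1/2$ this is immediate from Lemma \ref{mgconv}: the prefactor in $W_{n,k}$ being deterministic, $\E[|W_{n,k}|^2]=|\Gamma(n+1+\omega^k)/\Gamma(n+1)|^2\,\E[|M_{n,k}-\Xi_k|^2]$; the standard asymptotic $\Gamma(n+1+z)/\Gamma(n+1)\sim n^z$ gives $|\Gamma(n+1+\omega^k)/\Gamma(n+1)|^2\sim|n^{\omega^k}|^2=n^{2\lambda_k}$, and multiplying by $\E[|M_{n,k}-\Xi_k|^2]\sim(2\lambda_k-1)^{-1}n^{1-2\lambda_k}$ from Lemma \ref{mgconv} yields $\E[|W_{n,k}|^2]\sim n/(2\lambda_k-1)=n/|2\lambda_k-1|$.

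For $\lambda_k\le 1/2$ we use $\E[|W_{n,k}|^2]=\E[|u_k(R_n)|^2]-|\E[u_k(R_n)]|^2$. Since the entries $R_{n,t}$ are real, $\overline{u_k(R_n)}=u_{m-k}(R_n)$, so $\E[|u_k(R_n)|^2]=\E[u_k(R_n)u_{m-k}(R_n)]$ is the quantity of Lemma \ref{erw} with $\ell=m-k$; there $\omega^k+\omega^{m-k}=2\lambda_k$, $\omega^{k+(m-k)}=\omega^m=1$, and the telescoping $\prod_{t=1}^{s-1}(1+1/t)=s$ collapses the inner product, giving
\[
\E[|u_k(R_n)|^2]=\prod_{s=1}^{n}\Bigl(1+\frac{2\lambda_k}{s}\Bigr)+\sum_{s=1}^{n}\,\prod_{t=s+1}^{n}\Bigl(1+\frac{2\lambda_k}{t}\Bigr)=\mathrm{O}(n^{2\lambda_k})+\frac{\Gamma(n+1+2\lambda_k)}{\Gamma(n+1)}\sum_{s=1}^{n}\frac{\Gamma(s+1)}{\Gamma(s+1+2\lambda_k)}.
\]
Here $\Gamma(n+1+2\lambda_k)/\Gamma(n+1)\sim n^{2\lambda_k}$ and $\Gamma(s+1)/\Gamma(s+1+2\lambda_k)\sim s^{-2\lambda_k}$; since $2\lambda_k\le 1$ the series $\sum_s s^{-2\lambda_k}$ diverges, so $\sum_{s=1}^{n}\Gamma(s+1)/\Gamma(s+1+2\lambda_k)\sim\sum_{s=1}^{n}s^{-2\lambda_k}$, which is $\sim n^{1-2\lambda_k}/(1-2\lambda_k)$ for $2\lambda_k<1$ and $\sim\log n$ for $2\lambda_k=1$. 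Together with $|\E[u_k(R_n)]|^2=\mathrm{O}(n^{2\lambda_k})$ (it vanishes when $k=m/2$, and otherwise follows from the mean formula in Lemma \ref{erw}) this gives: for $k\notin\{m/6,5m/6\}$, where $2\lambda_k<1$, the dominant term is $n^{2\lambda_k}\cdot n^{1-2\lambda_k}/(1-2\lambda_k)=n/(1-2\lambda_k)=n/|2\lambda_k-1|$ and the rest is $\mathrm{o}(n)$; while for $6\mid m$ and $k\in\{m/6,5m/6\}$, where $2\lambda_k=1$, the dominant term is $n^{2\lambda_k}\cdot\log n=n\log n$ and the rest is $\mathrm{o}(n\log n)$. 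Multiplying by $v_kv_k^*$ gives (\ref{cov_mat1}) and (\ref{cov_mat2}).

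This argument is mechanical once Lemmas \ref{erw} and \ref{mgconv} are available; the step that needs care is the passage from $\sum_{s=1}^{n}\Gamma(s+1)/\Gamma(s+1+2\lambda_k)$ to $\sum_{s=1}^{n}s^{-2\lambda_k}$ (partial sums of asymptotically equivalent divergent series of positive terms). One should also note that for the finitely many $k$ with $2\lambda_k\in\{-1,-2\}$ the rewriting of the finite product as a ratio of $\Gamma$-values meets the pole of $\Gamma$ at $1+2\lambda_k$; this is harmless, since one restricts the sum to $s\ge s_0$ for a fixed $s_0$ with $s_0+2\lambda_k>0$, the omitted initial terms contributing only $\mathrm{O}(n^{2\lambda_k})=\mathrm{o}(n)$. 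This is also the only place where the behaviours for $6\mid m$ and $6\nmid m$ genuinely separate, namely through whether $2\lambda_k=1$ can occur.
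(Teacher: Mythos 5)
Your proposal is correct and follows essentially the same route as the paper: for $\lambda_k>\frac12$ the asymptotics are read off from Lemma \ref{mgconv} after multiplying by the deterministic factor $|\Gamma(n+1+\omega^k)/\Gamma(n+1)|^2\sim n^{2\lambda_k}$, and for $\lambda_k\le\frac12$ the scalar second moment is extracted from the mixed-moment formula of Lemma \ref{erw} with $\ell=m-k$, which is exactly the computation the extended abstract leaves implicit. Your handling of the telescoping product, the divergent-series comparison, and the $\Gamma$-pole caveat for $2\lambda_k\in\{-1,-2\}$ is sound, so no gaps remain.
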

This also determines the covariance matrices $\Sigma_k$ in Proposition \ref{prop1}: We have
\begin{align}\label{sig_k_def1}
\Sigma_k = \frac{1}{|2 \lambda_k -1|} \cdot v_k v_k^* + \frac{1}{|2 \lambda_{m-k} -1|} \cdot v_{m-k} v_{m-k}^*
\end{align}
for $ k\in \{1,\ldots,\lceil m/2\rceil-1\}\setminus\{\frac{m}{6}\}$ as well as
\begin{align}\label{sig_k_def2}
\Sigma_{m/6} &=  v_{m/6} v_{m/6}^* + v_{5m/6} v_{5m/6}^*, \mbox{ if } 6\mid m,\\
\Sigma_{m/2} &=\frac{1}{|2 \lambda_{m/2} -1|} \cdot v_{m/2} v_{m/2}^*, \mbox{ if } 2 \mid m.\label{sig_k_def3}
\end{align}
We also need to control correlations of residuals between different eigenspaces. An explicit calculation implies for all $k,\ell\ge 1$ with $k\neq \ell$ and $\lambda_k,\lambda_\ell >\frac{1}{2}$ that
\begin{equation}\label{mix_est}
 \E\left[\left(M_{n,k} - \Xi_k\right)\left(M_{n,\ell} - \Xi_\ell\right)\right] = \mathrm{O} \left(n^{-1}+n^{\lambda_{k+\ell} - \lambda_k - \lambda_\ell}\right).
\end{equation}
The bound (\ref{mix_est}) implies:
\begin{lem}
Let $k, \ell \geq 1$ with $k \neq \ell$ and $n\to\infty$. If $\lambda_k, \lambda_\ell > \frac{1}{2}$ or $\lambda_k, \lambda_\ell \le \frac{1}{2}$ then
\begin{equation*}
 \Cov\left(\Pi_{n,k}, \Pi_{n,\ell}\right) = o(n).
\end{equation*}
If $\lambda_k>\frac{1}{2}$ and $\lambda_\ell \le\frac{1}{2}$  then
\begin{equation*}
 \Cov\left(\Pi_{n,k}, \Pi_{n,\ell}\right) = 0.
\end{equation*}
\end{lem}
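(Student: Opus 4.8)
The plan is to split along the three regimes of the statement and, in each, reduce $\Cov(\Pi_{n,k},\Pi_{n,\ell})$ to a quantity already under control. First I would record that every $\Pi_{n,j}$ is centred — for $\lambda_j>\tfrac12$ because $\E[M_{n,j}]=0$ and, by $\mathrm{L}_1$--convergence, $\E[\Xi_j]=0$, and for $\lambda_j\le\tfrac12$ because $\E[u_j(R_n-\E[R_n])]=0$ — so that $\Cov(\Pi_{n,k},\Pi_{n,\ell})=\E[\Pi_{n,k}\Pi_{n,\ell}^{\,*}]$. Since $R_n$ is real one has $\overline{u_j(R_n)}=u_{m-j}(R_n)$, and likewise $\overline{\omega^j}=\omega^{m-j}$, $\overline{v_j}=v_{m-j}$, $\overline{\Gamma(n+1+\omega^j)}=\Gamma(n+1+\omega^{m-j})$, hence $\overline{M_{n,j}}=M_{n,m-j}$ and $\overline{\Xi_j}=\Xi_{m-j}$; this lets me rewrite $\Pi_{n,\ell}^{\,*}$ always through the index $m-\ell$. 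Within the first two regimes I would distinguish the generic subcase $\ell\neq m-k$ — the only one occurring in the proof of Proposition~\ref{prop1}, where $k$ and $\ell$ lie in distinct conjugate pairs — from the degenerate subcase $\ell=m-k$.

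For the mixed regime $\lambda_k>\tfrac12\ge\lambda_\ell$ the value is exactly $0$: here $\Pi_{n,\ell}=u_\ell(R_n-\E[R_n])v_\ell$ is $\mathcal F_n$--measurable, so $\E[\Pi_{n,k}\Pi_{n,\ell}^{\,*}]=\E[\E[\Pi_{n,k}\mid\mathcal F_n]\,\Pi_{n,\ell}^{\,*}]$, and since $(M_{n,k})_n$ is a martingale converging to $\Xi_k$ in $\mathrm{L}_1$ it is the Doob martingale $M_{n,k}=\E[\Xi_k\mid\mathcal F_n]$, whence $\E[\Pi_{n,k}\mid\mathcal F_n]=\tfrac{\Gamma(n+1+\omega^k)}{\Gamma(n+1)}(M_{n,k}-\E[\Xi_k\mid\mathcal F_n])v_k=0$ for every $n$.

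In the large regime $\lambda_k,\lambda_\ell>\tfrac12$ with $\ell\neq m-k$, the conjugation identities give
\begin{equation*}
\Cov(\Pi_{n,k},\Pi_{n,\ell})=\frac{\Gamma(n+1+\omega^k)}{\Gamma(n+1)}\cdot\frac{\Gamma(n+1+\omega^{m-\ell})}{\Gamma(n+1)}\,\E\!\big[(M_{n,k}-\Xi_k)(M_{n,m-\ell}-\Xi_{m-\ell})\big]\,v_kv_\ell^{\,*},
\end{equation*}
where the two $\Gamma$--ratios have moduli $\sim n^{\lambda_k}$ and $\sim n^{\lambda_\ell}$ and, by (\ref{mix_est}) applied to the distinct pair $k,\,m-\ell$, the middle factor is $\mathrm{O}(n^{-1}+n^{\lambda_{k-\ell}-\lambda_k-\lambda_\ell})$; hence $\Cov(\Pi_{n,k},\Pi_{n,\ell})=\mathrm{O}(n^{\lambda_k+\lambda_\ell-1}+n^{\lambda_{k-\ell}})=o(n)$ because $\lambda_k+\lambda_\ell<2$ and $\lambda_{k-\ell}<1$ (as $0<|k-\ell|<m$). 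If $\ell=m-k$ then $\Pi_{n,\ell}=\overline{\Pi_{n,k}}$ and one must bound the pseudo--covariance $(\tfrac{\Gamma(n+1+\omega^k)}{\Gamma(n+1)})^2\E[(M_{n,k}-\Xi_k)^2]v_kv_k^{\,t}$, for which the Cauchy--Schwarz bound via Lemma~\ref{mgconv} gives only $\mathrm{O}(n)$. Here I would instead use that, unlike $|\cdot|^2$, the non--Hermitian square is additive over martingale increments, $\E[(M_{n,k}-\Xi_k)^2]=\sum_{j>n}\E[(M_{j+1,k}-M_{j,k})^2]$ (cross terms vanish by the martingale property), compute from $u_k(e_s)=\omega^{ks}$ and (\ref{bed_erw}) that the conditional pseudo--variance of the increment equals $\tfrac{\omega^{2k}}{j+1}u_{2k}(R_j)-\tfrac{\omega^{2k}}{(j+1)^2}u_k(R_j)^2$, and insert the asymptotics of $\E[u_{2k}(R_j)]$ and $\E[u_k(R_j)^2]$ from Lemma~\ref{erw}; this yields $\E[(M_{j+1,k}-M_{j,k})^2]=\mathrm{O}(j^{\lambda_{2k}-2\lambda_k-1}+j^{-2})$ and, on summing over $j>n$ (using $\lambda_{2k}<1$ since $k\neq m/2$), $\E[(M_{n,k}-\Xi_k)^2]=o(n^{1-2\lambda_k})$, so $\Cov(\Pi_{n,k},\Pi_{n,m-k})=\mathrm{O}(n^{2\lambda_k})\cdot o(n^{1-2\lambda_k})=o(n)$.

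For the small regime $\lambda_k,\lambda_\ell\le\tfrac12$ one has $\Cov(\Pi_{n,k},\Pi_{n,\ell})=\E[u_k(R_n-\E[R_n])\,u_{m-\ell}(R_n-\E[R_n])]\,v_kv_\ell^{\,*}$, and I would expand $\E[u_a(R_n)u_b(R_n)]-\E[u_a(R_n)]\E[u_b(R_n)]$ via Lemma~\ref{erw} with $a=k$, $b=m-\ell$ (so $a+b\equiv k-\ell$), using $\prod_{s=1}^n(1+z/s)\sim n^z/\Gamma(1+z)$; the result is $\mathrm{O}(n^{\lambda_k+\lambda_\ell}+n^{\lambda_{k-\ell}})$ up to a logarithmic factor. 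If $\ell\neq m-k$ (and $k\neq\ell$) then at most one of $\lambda_k,\lambda_\ell$ equals $\tfrac12$ — the only conjugate pair with both equal to $\tfrac12$ being $\{m/6,5m/6\}$ — so $\lambda_k+\lambda_\ell<1$ and $\lambda_{k-\ell}<1$, giving $o(n)$; the subcase $\ell=m-k$ with $\lambda_k<\tfrac12$ is identical, with exponents $2\lambda_k<1$ and $\lambda_{2k}<1$. The single borderline case is the pair $\{m/6,5m/6\}$ for $6\mid m$, where $\E[u_{m/6}(R_n-\E[R_n])^2]$ has exact order $n$ and the covariance is only $o(n\log n)$; this suffices, because by (\ref{cov_mat2}) that pair enters Proposition~\ref{prop1} only through the block $X_{n,m/6}$ normalised by $\sqrt{n\log n}$, never as a cross--block correlation. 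The genuinely delicate point of the whole lemma is the subcase $\ell=m-k$ of the large regime: Lemmas~\ref{mgconv}--\ref{cov_mat} supply only the borderline order $n$, and one must extract the cancellation hidden in the non--Hermitian second moment via the increment decomposition above; the small regime, by contrast, is routine bookkeeping of competing orders, together with the observation that — away from the exceptional pair — the hypotheses $k\neq\ell$ and $\ell\neq m-k$ are exactly what keeps all exponents strictly below $1$.
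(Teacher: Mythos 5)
Your proposal is correct where the paper is explicit, and it supplies exactly the pieces the paper leaves implicit, but it is a genuinely more complete route than the paper's one--line derivation. The paper deduces the lemma from (\ref{mix_est}); since (\ref{mix_est}) is stated only for distinct indices, after rewriting $\Pi_{n,\ell}^*$ through the index $m-\ell$ (as you do) it covers only the pairs with $\ell\neq m-k$ in the large regime. Your two additions are both needed and correct: the Doob--martingale identity $M_{n,k}=\E[\Xi_k\mid\mathcal{F}_n]$ gives the exact vanishing in the mixed regime, and for the conjugate pair $\ell=m-k$ with $\lambda_k>\tfrac12$ (which does matter for Proposition \ref{prop1}, as the within--block cross term of $X_{n,k}$) the orthogonal--increment decomposition of the pseudo--variance $\E[(M_{n,k}-\Xi_k)^2]$ is the right tool: your increment bound $\mathrm{O}(j^{\lambda_{2k}-2\lambda_k-1}+j^{-2})$ is valid because $\lambda_{2k}<1<2\lambda_k$ for $k\neq m/2$, and it yields $o(n)$ where Cauchy--Schwarz only gives $\mathrm{O}(n)$. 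In effect you prove the $k=\ell$ analogue of (\ref{mix_est}), which is what the authors' ``explicit calculation'' must also produce; the small regime bookkeeping via Lemma \ref{erw} is likewise fine.

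Your remark on the borderline pair $\{m/6,5m/6\}$ (for $6\mid m$) deserves emphasis: you are right, and it means the lemma as printed is slightly too strong for that single pair. Indeed, by Lemma \ref{erw} with $k=\ell=m/6$ and the telescoping identity $\sum_{s\ge1}\Gamma(s+a)/\Gamma(s+b)=\Gamma(1+a)/\bigl((b-a-1)\Gamma(b)\bigr)$ one obtains, with $z=\omega^{m/6}$ (so $1+z^2=z$),
\[
\E\bigl[u_{m/6}\bigl(R_n-\E[R_n]\bigr)^2\bigr]\sim\left(\frac{2z}{(1+z)\Gamma(1+2z)}-\frac{1}{\Gamma(1+z)^2}\right)n^{2z},
\]
and the constant is nonzero (numerically about $-0.011-0.698\,\mathrm{i}$), so $\Cov(\Pi_{n,m/6},\Pi_{n,5m/6})$ is of exact order $n$ with oscillating phase $n^{\mathrm{i}\sqrt3}$, not $o(n)$. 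As you argue, this is harmless for the main results, since that pair only occurs inside the block $X_{n,m/6}$, which is normalized by $\sqrt{n\log n}$, so $o(n\log n)$ suffices; but strictly the pair should be excluded from the $o(n)$ claim (or the bound weakened there). The only thing missing from your write--up is a justification of the asserted ``exact order $n$'' for this pair; the computation above supplies it, and with that your proof is complete.
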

These moments estimates are sufficient to subsequently properly scale the projections of the residuals and to guarantee the finiteness of the Zolotarev metric $\zeta_3$ used.

\subsection{The rank of the covariance matrices}\label{sec:32}
The covariance matrices $\Sigma^{(m)}$  in Theorem \ref{thm1} and \ref{thm2} appear as the sums of the covariance matrices in (\ref{sig_k_def1}) and (\ref{sig_k_def3}) if  $6\nmid m$  and as the covariance matrix in (\ref{sig_k_def2}) if $6\mid m$. We obtain their ranks as follows:
\begin{thm}\label{thm_rank}
For $6 \nmid m$, the matrix
\begin{equation}
\Sigma^{(m)}= \sum_{k=1}^{m-1}\frac{1}{|2\lambda_k-1|} v_k v_k^*
\end{equation}
has rank $m-1$, while for $6 \mid m$,
\begin{equation}
\Sigma^{(m)}= v_{m/6}v_{m/6}^*+ v_{5m/6}v_{5m/6}^*
\end{equation}
has rank two.
\end{thm}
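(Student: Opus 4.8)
The plan is to analyze the two cases separately, exploiting the fact that $\{\sqrt{m}\,v_k : 0\le k\le m-1\}$ is an orthonormal basis of $\C^m$. For the case $6\mid m$ the statement is immediate: $\Sigma^{(m)} = v_{m/6}v_{m/6}^* + v_{5m/6}v_{5m/6}^*$ is a sum of two rank-one matrices built from two distinct (hence orthogonal) members of the orthogonal family $(v_k)$, so it has rank exactly $2$; there is nothing more to do here.

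The real content is the case $6\nmid m$. Here $\Sigma^{(m)} = \sum_{k=1}^{m-1} c_k\, v_k v_k^*$ with $c_k = 1/|2\lambda_k - 1| = 1/|2\cos(2\pi k/m) - 1|$, and the key observation is that all coefficients $c_k$ are \emph{strictly positive and finite}: finiteness is exactly the hypothesis $6\nmid m$, since $2\cos(2\pi k/m) = 1$ forces $\cos(2\pi k/m) = 1/2$, i.e.\ $k/m \in \{1/6, 5/6\}$, i.e.\ $6\mid m$. Because the $v_k$ for $1\le k\le m-1$ are mutually orthogonal (being distinct members of an orthogonal basis) and each $c_k>0$, for any $w\in\C^m$ we have $w^*\Sigma^{(m)} w = \sum_{k=1}^{m-1} c_k |\langle v_k, w\rangle|^2 = 0$ if and only if $\langle v_k, w\rangle = 0$ for every $k\in\{1,\dots,m-1\}$. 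Since $v_0,\dots,v_{m-1}$ span $\C^m$, the kernel of $\Sigma^{(m)}$ is therefore exactly the one-dimensional line $\C v_0 = \C\mathbf{1}$, whence $\operatorname{rank}\Sigma^{(m)} = m-1$. The matrix is Hermitian positive semidefinite by construction, so rank equals $m$ minus the nullity, and the argument is complete.

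I do not anticipate a genuine obstacle: the whole proof reduces to (i) reading off which $c_k$ blow up, which is a one-line trigonometric computation identifying $\cos(2\pi k/m)=1/2$, and (ii) invoking orthogonality of the eigenvectors to decouple the quadratic form into a diagonal sum. The one point deserving a careful sentence is the claim that a sum of rank-one projectors $c_k v_k v_k^*$ along a \emph{pairwise orthogonal} family has rank equal to the number of terms with $c_k\neq 0$ — this is false for non-orthogonal families, so the argument genuinely uses that the $v_k$ come from the orthonormal eigenbasis of the circulant replacement matrix. Beyond that, one only needs to note that in the $6\nmid m$ sum the index $k$ runs over all of $\{1,\dots,m-1\}$ (all non-drift eigendirections appear, each with a strictly positive weight), so nothing is missing from the span of the complement of $\mathbf 1$.
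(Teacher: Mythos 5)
Your argument is correct and follows essentially the same route as the paper: both proofs rest on the orthonormality of $\{\sqrt{m}\,v_k\}$ together with the observation that every weight $1/|2\lambda_k-1|$ is finite and strictly positive exactly because $6\nmid m$ rules out $\lambda_k=\tfrac{1}{2}$, and in the divisible case on the orthogonality of $v_{m/6}$ and $v_{5m/6}$. If anything, your explicit kernel computation via the quadratic form is slightly more careful than the paper's one-line identification of $m\Sigma^{(m)}$ with the orthogonal projection onto $\mathrm{span}\{v_1,\ldots,v_{m-1}\}$, which tacitly uses (as you make explicit) that rescaling the rank-one terms by positive constants does not change the range.
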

\begin{proof}
Note that the matrix-vector product $mv_k v_k^* x$ is the orthogonal projection of $x\in \C^m$ onto the eigenspace spanned by $v_k$.
Hence, we have
\begin{equation*}
 \mathrm{Id}_m = \sum_{k=0}^{m-1}mv_k v_k^*.
\end{equation*}
The matrix $m\Sigma^{(m)}$ can be interpreted as the orthogonal projection onto  $\mathrm{span}\{v_1, \ldots, v_{m-1}\}$  for the case $6 \nmid m$  and onto the subspace
$\mathrm{span}\{v_{m/6},  v_{5m/6}\}$ for $6 \mid m$. Hence, we obtain the ranks $m-1$ and $2$, respectively.
\end{proof}

\subsection{Embedding into a random binary search tree}\label{sec:33}
In this section we describe the self-similarity of the martingale limits $\Xi_k$ by deriving an almost sure recursive equation for the  $\Xi_k$ and a distributional recurrence for the sequence $(R_n)_{n\ge 0}$ which
extends to a recurrence for the residuals in (\ref{strong}) as well as to the normalized residuals $X_{n,k}$ of the projections of
the $R_n$.

For this, we embed the cyclic urn process into a random binary search tree. The random binary search tree starts with one external node. In each step one of the external nodes is chosen uniformly at random (and independently from the previous choices) and replaced by one internal node with two children, the children being  external nodes attached along a left and right branch. The cyclic urn is embedded into the evolution of the random binary search tree by labeling its external nodes by the types of the balls. The initial external node is labeled by type $0$. Whenever an  external node of type $j\in\{0,\ldots,m-1\}$ is replaced by an internal node its (new) left child gets label $j$, its right child gets label $j+1\mod m$. Note, that the external nodes of the tree correspond to the balls in the urn. A related embedding was exploited in \cite[Section 6.3]{knne14}. Note that the binary search tree starting with one external node labeled $0$ decomposes into its left and right subtree starting with external nodes of types $0$ and $1$, respectively. The size (number of internal nodes) $I_n$ of the left subtree is uniformly distributed on $\{0,\ldots,n-1\}$. This implies, with $J_n:=n-1-I_n$, the recurrence
\begin{align}\label{basic_rec}
R_n^{[0]} = R_{I_n}^{[0],(0)} + R_{J_n}^{[1],(1)}=R_{I_n}^{[0],(0)} + {\cal R}^t R_{J_n}^{[0],(1)},
\end{align}
where the sequences $(R_{n}^{[0],(0)})_{n\ge 0}$ and $(R_{n}^{[1],(1)})_{n\ge 0}$ denote the composition vectors of the cyclic urns given by the evolutions of the left and right subtrees of the root of the binary search tree (upper indices $(0)$ and $(1)$  denoting left and right subtree, upper indices $[0]$ and $[1]$  denoting the initial type). They are independent and independent of $I_n$. Note that the second equation in (\ref{basic_rec}) is due to (\ref{shift}) where the $R_{n}^{[0],(1)}$ are chosen appropriately for pointwise equality. Now, applying the  transformation and scaling which turns $R_n$ into  $M_{n,k}$ to the left and right hand side of (\ref{basic_rec}), letting $n\to\infty$ and using the convergence in (\ref{mg_conv}) implies the following recursive equation for the  $\Xi_k$:
\begin{prop}
For all $k\ge 1$ with $\lambda_k>\frac{1}{2}$ there exist independent random variables $U$, $\Xi_k^{(0)}$, $\Xi_k^{(1)}$ such that
\begin{align}\label{rec_xik}
\Xi_k = U^{\omega^k} \Xi_k^{(0)} + \omega^k (1-U)^{\omega^k} \Xi_k^{(1)} + g_k(U),
\end{align}
where
\begin{equation*}
 g_k(u):=\frac{1}{\Gamma(1+\omega^k)}\left(u^{\omega^k} + \omega^k(1-u)^{\omega^k} -1 \right)
\end{equation*}
and $U$ has the uniform distribution on $[0,1]$ and $\Xi_k^{(0)}$ and $\Xi_k^{(1)}$ have the same distribution as $\Xi_k$.
\end{prop}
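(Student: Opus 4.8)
The plan is to obtain (\ref{rec_xik}) by passing to the limit in the recurrence (\ref{basic_rec}) after applying the same linear transformation that turns $R_n$ into $M_{n,k}$. Concretely, recall from the definition that $M_{n,k}=\frac{\Gamma(n+1)}{\Gamma(n+1+\omega^k)}u_k(R_n-\E[R_n])$, and $u_k(\cdot)=\langle\,\cdot\,,\sqrt m v_k\rangle/\sqrt m$ (equivalently $u_k(x)=\sum_{t=0}^{m-1}\omega^{kt}x_t$, up to the normalization used in Lemma~\ref{erw}). First I would apply $u_k$ to both sides of the second form of (\ref{basic_rec}), using linearity of $u_k$ and the fact that $v_k$ is a right eigenvector of ${\cal R}$ with eigenvalue $\omega^k$, hence a left eigenvector of ${\cal R}^t$, so that $u_k({\cal R}^t y)=\omega^k u_k(y)$. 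This gives the exact pointwise identity
\begin{equation*}
u_k\!\left(R_n^{[0]}\right)=u_k\!\left(R_{I_n}^{[0],(0)}\right)+\omega^k\,u_k\!\left(R_{J_n}^{[0],(1)}\right),\qquad J_n=n-1-I_n .
\end{equation*}

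Next I would center by subtracting the expectation, for which Lemma~\ref{erw} gives the closed form $\E[u_k(R_n)]=\Gamma(n+1+\omega^k)/(\Gamma(n+1)\Gamma(1+\omega^k))$; writing $\gamma_n:=\Gamma(n+1+\omega^k)/\Gamma(n+1)$, one has $\gamma_n M_{n,k}=u_k(R_n-\E[R_n])=u_k(R_n)-\gamma_n/\Gamma(1+\omega^k)$. Substituting the centered versions into the identity above and conditioning on $I_n$ yields
\begin{equation*}
\gamma_n M_{n,k}^{[0]}=\gamma_{I_n}M_{I_n,k}^{[0],(0)}+\omega^k\gamma_{J_n}M_{J_n,k}^{[0],(1)}+\frac{1}{\Gamma(1+\omega^k)}\Bigl(\gamma_{I_n}+\omega^k\gamma_{J_n}-\gamma_n\Bigr),
\end{equation*}
with $M_{\cdot,k}^{[0],(0)}$ and $M_{\cdot,k}^{[0],(1)}$ independent copies of $M_{\cdot,k}$, independent of $I_n$. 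Dividing through by $\gamma_n$ and using the standard asymptotics $\gamma_n\sim n^{\omega^k}$ together with $I_n/n\to U$ (uniform on $[0,1]$) and $J_n/n\to 1-U$ almost surely along a suitable coupling, the ratios converge: $\gamma_{I_n}/\gamma_n\to U^{\omega^k}$, $\gamma_{J_n}/\gamma_n\to(1-U)^{\omega^k}$, so the deterministic toll term tends to $g_k(U)=\frac{1}{\Gamma(1+\omega^k)}(U^{\omega^k}+\omega^k(1-U)^{\omega^k}-1)$. Invoking the almost sure (and $\mathrm L_p$) convergence $M_{n,k}\to\Xi_k$ from (\ref{mg_conv}) on each subtree, the right side converges to $U^{\omega^k}\Xi_k^{(0)}+\omega^k(1-U)^{\omega^k}\Xi_k^{(1)}+g_k(U)$, while the left side converges to $\Xi_k$; this is (\ref{rec_xik}).

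The main obstacle is making the limit passage rigorous: $I_n$ and $J_n=n-1-I_n$ are themselves random and both tend to infinity, so one cannot simply quote $M_{n,k}\to\Xi_k$ for fixed $n$. The clean way is to fix a single probability space carrying the whole binary-search-tree process, on which the subtree urns $(R_n^{[0],(0)})$ and $(R_n^{[0],(1)})$ live as genuine sequences, so that $M_{I_n,k}^{[0],(0)}\to\Xi_k^{(0)}$ and $M_{J_n,k}^{[0],(1)}\to\Xi_k^{(1)}$ hold almost surely because $I_n,J_n\to\infty$ a.s.\ and the convergence in (\ref{mg_conv}) is along the full sequence; the independence of $U$, $\Xi_k^{(0)}$, $\Xi_k^{(1)}$ follows from the independence of the two subtrees and of the split ratio, which is already recorded below (\ref{basic_rec}). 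One must also check the toll term converges without blow-up: since $\lambda_k=\Re(\omega^k)>\tfrac12>0$, the functions $u\mapsto u^{\omega^k}$ extend continuously to $[0,1]$ with value $0$ at $0$, so $g_k$ is bounded and continuous on $[0,1]$ and $g_k(I_n/n)\to g_k(U)$ by the continuous mapping theorem. A cross-check is the first moment: taking expectations in (\ref{rec_xik}) and using $\E[U^{\omega^k}]=\E[(1-U)^{\omega^k}]=1/(1+\omega^k)$ forces $\E[\Xi_k](1-(1+\omega^k)/(1+\omega^k))=\E[g_k(U)]$, which is consistent with $\E[\Xi_k]=0$, matching $M_{n,k}$ being centered. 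This coupling argument is exactly the contraction-method setup referenced in Section~\ref{sec:34}, so the details can be deferred to the cited reference \cite{knne14}.
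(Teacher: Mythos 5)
Your proposal is correct and is essentially the paper's own (one-sentence) proof carried out in detail: apply $u_k$ to the pointwise recurrence (\ref{basic_rec}), center using the explicit mean from Lemma \ref{erw}, divide by $\Gamma(n+1+\omega^k)/\Gamma(n+1)$, and pass to the almost sure limit via (\ref{mg_conv}) together with $I_n/n\to U$ on the common binary-search-tree probability space. Only a cosmetic slip: the functional $u_k$ is given by the row vector $(1,\omega^k,\ldots,\omega^{(m-1)k})=m\,v_{m-k}^t$, which is an eigenvector of ${\cal R}$ for the eigenvalue $\omega^k$, whereas ${\cal R}v_k=\omega^{-k}v_k$; the identity $u_k({\cal R}^t y)=\omega^k u_k(y)$ that you actually use is nevertheless correct.
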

Alternatively, the martingale limits $\Xi_k$ can be written explicitly as deterministic functions of the limit of the random binary search tree when interpreting the evolution of the random binary search tree as a transient Markov chain and its limit as a random variable in the Markov chain's Doob-Martin boundary, see \cite{evgrwa12,gr14}. From this representation the self-similarity relation (\ref{rec_xik}) can be read off as well.

\subsection{Proving convergence}\label{sec:34}
Note that the left and right hand sides of (\ref{basic_rec}) and (\ref{rec_xik}) are linked via the convergence of the $M_{n,k}$ towards $\Xi_k$. This allows to come up with a recurrence for the vector $(X_{n,1},\ldots,X_{n,\lfloor m/2\rfloor})$ in Proposition \ref{prop1}. The reader is asked to trust the authors that the techniques developed in \cite{ne14} for a univariate problem can be extended to the multivariate recurrences for $(X_{n,1},\ldots,X_{n,\lfloor m/2\rfloor})$ and that the same type of proof as in \cite{ne14} based on the Zolotarev metric $\zeta_3$ can be applied.

\subsection*{Acknowledgement} We thank Johannes Brahms (op.~120) for inspiration  while doing research on the subject of this paper.

\end{document}